\documentclass[12pt, a4paper, leqno]{amsart}

\usepackage[utf8]{inputenc}
\usepackage{amsmath} 
\usepackage{amsfonts}
\usepackage{amssymb}
\usepackage{txfonts}   
\usepackage{amsfonts}     
\usepackage{graphicx}     
\usepackage[dvipsnames]{xcolor}
\usepackage[colorlinks, allcolors=RedViolet]{hyperref}
\usepackage{cancel}

\newcommand{\Rn}{{\varmathbb{R}^n}}

\newcommand{\W}{\mathcal{W}}
\newcommand{\V}{\mathcal{V}}
\newcommand{\Ha}{\mathcal{H}} 
\newcommand{\M}{\mathcal{M}}

\newcommand{\I}{\mathcal{I}}

\def\diam{\qopname\relax o{diam}}
\def\loc{{\qopname\relax o{loc}}}

\def\min{\qopname\relax o{min}}
\def\diam{\qopname\relax o{diam}}
\def\inte{\qopname\relax o{int}}

\def\phi{\varphi}

\def\wL{{w\text{-}L}}

\let\oldmarginpar\marginpar
\renewcommand\marginpar[1]{\-\oldmarginpar[\raggedleft\footnotesize #1]%
{\raggedright\footnotesize #1}}

\usepackage{amsthm}

\swapnumbers
\theoremstyle{plain}
\newtheorem{theorem}[equation]{Theorem}
\newtheorem{lemma}[equation]{Lemma}

\theoremstyle{definition}

\theoremstyle{remark}
\newtheorem{remark}[equation]{Remark}

\numberwithin{equation}{section}

\title{A note on new mapping properties for Wolff potential}

\author{Petteri Harjulehto}
\address[Petteri Harjulehto]{Department of Mathematics and Statistics,
FI-00014 University of Helsinki, Finland}
\email{petteri.harjulehto@helsinki.fi}

\author{Ritva Hurri-Syrj\"anen}
\address[Ritva Hurri-Syrj\"anen]{Department of Mathematics and Statistics,
FI-00014 University of Helsinki, Finland}
\email{ritva.hurri-syrjanen@helsinki.fi}

\date{\today}

\begin{document}

\thanks{The authors are grateful to John L. Lewis for inspiring discussions about the early work of Tom Wolff on nonlinear potential theory.}

\keywords{Choquet integral, Hausdorff content, integrability, Havin--Maz'ya potential, Wolff potential, $p$-Laplace, integrability of solutions}
\subjclass[2020]{42B20, 35J60, 31C45, 28A25}

\begin{abstract} 
We study integrability properties of the Wolff potential in context of Choquet integrals with respect to the Hausdorff content.
As an application we give integrability results to the solutions of the $p$-Laplace equation in this context.
\end{abstract}

\maketitle

\section{Introduction}\label{Intro}

The Wolff
potential used in the present paper is given by
\begin{equation*}
  \W^{f}_{\alpha ,p}(x):=\int_0^\infty\left( \frac{\int_{B(x,r)} |f(y)| \, dy }{r^{n-\alpha p}}
  \right)^{1/(p-1)}\frac{dr}{r},
\end{equation*}
where
$n\geq 2$,
$p\in (1,\infty )$, 
 and  $\alpha p\in (0,n)$,
\cite[Section 4.5]{AH}.
Here $B(x,r)$ is a ball in $\Rn$ centred at $x$ with radius $r$ and $f$ is a locally integrable function defined on  $\Rn$.

The $W$-potential was introduced 
in \cite[p. 169]{HedbergWolff1983}
by Thomas H. Wolff  \cite[p. 166]{HedbergWolff1983} in his joint paper with Lars Inge Hedberg.
David R.\ Adams and Hedberg coined the name Wolff potential for the $W$-potential, 
\cite[Section 4.9]{AH}.
Based on the use of the $W$-potential it was possible to develop the nonlinear $L^p$ potential theory parallel to the classical $L^2$ theory.
The Wolff potential has turned out to be an essential tool to study problems in analysis.
In the present paper we consider the integrability properties of the Wolff potential in the context of Choquet integrals with respect to the Hausdorff content.
We benefit in our proofs from the recent results of \cite{CHYZ, HKST2024}.

The usual way to look at the mapping properties of the Wolff potential
is to estimate the Wolff potential pointwise by the potential 
that is an iterated Riesz potential: 
\begin{equation}\label{HV}
\V^f_{\alpha, p}(x) := \I_{\alpha}\Big( (\I_{\alpha}f)^\frac1{p-1} \Big)(x)\,.
\end{equation}
Here,
\begin{equation*}
    \I_{\alpha} f(x) := \int_\Rn \frac{|f(y)|}{|x-y|^{n-\alpha}} \, dy
\end{equation*}
is the Riesz potential of Marcel Riesz.
Viktor P. Havin and Vladimir G. Maz'ya  introduced \eqref{HV} as an independent potential and started its investigation in  their paper \cite{HM} according to \cite[p. 299]{Hedberg1972Zeit}.
Later it has been given the name the Havin--Maz'ya potential.
The  nonlinear potential  had appeared implicitly  in the papers of Norman G. Meyers \cite{Meyers} and Juri Reshentjak \cite{Reshentjak}.
We recall that 
the pointwise estimate 
\begin{equation*}
 \W^{f}_{\alpha ,p}(x) \lesssim \V^f_{\alpha, p}(x)
\end{equation*}
 for all $x\in\Rn$ whenever $p > 1$ and  $0<p\alpha < n$; we refer to 
\cite[Lemma 7.1 p. 136]{HM} and \cite[Theorem 3.1]{AM}. For the validity of
the pointwise estimate
\begin{equation*}
\V^f_{\alpha, p}(x) \lesssim \W^{f}_{\alpha ,p}(x)\,, \quad x\in\Rn\,,
\end{equation*}
whenever  $0<\alpha <n$, $1<p<n$, and $2-\frac{\alpha}{n}<p<\frac{n}{\alpha}$\,,  we refer to
\cite[Theorem 6.1. p. 78]{HM} and \cite[Theorem 3.3]{AM}.

If the function $f$ in \eqref{HV} is $L^q$-integrable over  $\Rn$ with certain values of $q>1$, it is known that the Havin--Maz'ya potential is $L^s$-integrable with
$s=\frac{nq(p-1)}{n-\alpha q p}$, \cite[Theorem 3.1]{Cianchi}.
We extend the  $L^s$-integrability result by considering 
 the $L^s$- integrability in the context of Choquet integrals with respect to the Hausdorff content
$\Ha^\delta_\infty$.
If  $\Omega$ is an open set in $\Rn$, 
we show that the Havin--Maz'ya potential belongs to the space
 \begin{align*}\notag
L^{s}(\Omega, \Ha^{\delta}_\infty) \label{ChoquetSpace}
=
\Big\{f: \Omega \to [0, \infty] :      \int_\Omega |f|^{s} \, d \Ha^\delta_\infty   < \infty          \Big\}
\end{align*}
with certain values of $s$ and $\delta$.
Our main theorem in this context is
Theorem~\ref{thm:H-M>1}.

 If the function $f$ in \eqref{HV} is $L^1$-integrable over $\Rn$, then the Havin--Maz'ya potential is known to be  in a certain  weak $L^s$ space.
 We generalise the known weak $L^s$-result to context of Choquet integrals  in Theorem \ref{thm:H-M=1}.
Here, the weak $L^s$-space  is defined as
\begin{equation}\notag
\wL^s(\Omega, \Ha^{\delta}_\infty)=
\Big\{f: \Omega \to [0, \infty] :          \| f \|_{\wL^s(\Omega, \Ha^\delta_\infty)}     <\infty            \Big\}\,,
\end{equation}
where
\[
\| f \|_{\wL^s(\Omega, \Ha^\delta_\infty)}= \Big(\sup_{\lambda >0} \big( \lambda^s  \, \Ha^{\delta}_\infty(\{x \in \Omega : \vert f(x)\vert>\lambda\}) \big) \Big)^{\frac1s}\,.
\]

In particular, our integrability results for the Havin--Maz'ya potential  in Theorem~\ref{thm:H-M>1} and Theorem~\ref{thm:H-M=1} imply  results  for the Wolff potential.

\begin{theorem}\label{thm:Wolf}
Let $f$ be a  function defined on $\Rn$, $n\geq 2$, and let $\alpha \in (0,n)$  and 
$\kappa\in [0,1)$ be given.\\
 (a) 
 Suppose that $p>1$ and $q>1$ satisfy the inequalities
 $1+1/q-\alpha/n<p$ and $\alpha pq<n$.
  If
 $f\in L^q(\Rn)$,
 then
 \[ 
  \|\W^f_{\alpha, p} \|_{L^s(\Rn,\Ha^{n- \kappa q}_\infty)}
  \le c \|f\|_{L^q(\Rn)}^{\frac{1}{p-1}},
  \]
  where $s:= \frac{ q(p-1)(n-\kappa q)}{n-\alpha pq}$ and $c$ is a  finite positive constant which depends on 
  $\alpha$, $\kappa$, $n$, $p$, and $q$ only.

(b) If $f\in L^{1}(\Rn)$ and
$p \in (2-\frac{\alpha}{n}, \frac{n}{\alpha})$,  then
 \[ 
  \|\W^f_{\alpha, p} \|_{\wL^s(\Rn,\Ha^{n- \kappa}_\infty)}
  \le c \|f\|_{L^1(\Rn)}^{\frac{1}{p-1}},
  \]
where $s:= \frac{ (p-1)(n-\kappa)}{(n-\alpha p)}$ and $c>0$ is a finite constant which depends  on $\alpha, \kappa, n, p$ only.
\end{theorem}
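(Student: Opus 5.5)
The plan is to deduce both parts from the corresponding results for the Havin--Maz'ya potential, Theorem~\ref{thm:H-M>1} and Theorem~\ref{thm:H-M=1}, through the pointwise comparison between $\W^f_{\alpha,p}$ and $\V^f_{\alpha,p}$ recalled in the introduction. Since the Choquet integral with respect to $\Ha^\delta_\infty$ and the weak quasi-norm are both monotone in the integrand, a pointwise bound $\W^f_{\alpha,p}(x)\le C\,\V^f_{\alpha,p}(x)$ for all $x\in\Rn$ gives at once
\[
\|\W^f_{\alpha,p}\|_{L^s(\Rn,\Ha^{\delta}_\infty)}\le C\,\|\V^f_{\alpha,p}\|_{L^s(\Rn,\Ha^{\delta}_\infty)},
\]
and likewise for $\wL^s$. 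Only positive homogeneity of the Choquet integral and monotonicity of $\Ha^\delta_\infty$ are needed here.

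First step: record the pointwise estimate $\W^f_{\alpha,p}\lesssim \V^f_{\alpha,p}$, valid for $p>1$ and $0<\alpha p<n$ by \cite[Lemma 7.1]{HM} and \cite[Theorem 3.1]{AM}. If a self-contained argument is wanted, I would use $|x-y|<r$ on $B(x,r)$ to get
\[
\frac{\int_{B(x,r)}|f|}{r^{n-\alpha p}}\lesssim r^{\alpha(p-1)}\,\frac{1}{r^{n}}\int_{B(x,r)}\int_{B(z,2r)}\frac{|f(y)|}{|z-y|^{n-\alpha}}\,dy\,dz .
\]
Raising this to the power $1/(p-1)$ and integrating in $dr/r$ then yields $\I_\alpha\big((\I_\alpha f)^{1/(p-1)}\big)(x)$, with Jensen's inequality used for $p\ge 2$ and a dyadic decomposition of $r$ for $p<2$. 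In case (a) the hypothesis $\alpha pq<n$ with $q>1$ gives $\alpha p<n$. In case (b) the assumption $p<n/\alpha$ gives it directly.

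Second step: apply the Havin--Maz'ya results. For (a), Theorem~\ref{thm:H-M>1} is presumably stated under exactly the hypotheses $1+1/q-\alpha/n<p$ and $\alpha pq<n$, with $\delta=n-\kappa q$ and $s=q(p-1)(n-\kappa q)/(n-\alpha pq)$. It should therefore give $\|\V^f_{\alpha,p}\|_{L^s(\Rn,\Ha^{n-\kappa q}_\infty)}\le c\|f\|_{L^q}^{1/(p-1)}$, and (a) follows. For (b), Theorem~\ref{thm:H-M=1} with $\delta=n-\kappa$ and $s=(p-1)(n-\kappa)/(n-\alpha p)$ gives the weak bound, and the same monotonicity argument finishes the proof. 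The dependence of the constants on $\alpha,\kappa,n,p,q$ only is inherited from both steps.

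The main obstacle is matching hypotheses. I have to check that the parameter ranges in Theorems~\ref{thm:H-M>1} and~\ref{thm:H-M=1} are implied by those stated here. If instead those theorems have the extra restriction $p\in(2-\alpha/n,n/\alpha)$, which appears in (b) and is the range where the reverse estimate $\V\lesssim\W$ holds, I would verify that the restriction is not needed for the direction $\W\lesssim\V$ used here. The condition $1+1/q-\alpha/n<p$ in (a) is exactly what makes $(\I_\alpha f)^{1/(p-1)}$, which lies in $L^{q^*/(p-1)}$ with $q^*=nq/(n-\alpha q)$, satisfy $q^*/(p-1)>1$. That is what is needed to apply the Riesz potential bound in the Choquet setting from \cite{CHYZ,HKST2024}, so it must be the hypothesis used in Theorem~\ref{thm:H-M>1}.
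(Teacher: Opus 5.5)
Your argument is the paper's own: the pointwise bound $\W^f_{\alpha,p}\lesssim\V^f_{\alpha,p}$ from \cite[Lemma 7.1]{HM} and \cite[Theorem 3.1]{AM}, then monotonicity of the Choquet quasi-norms, then Theorems~\ref{thm:H-M>1} and~\ref{thm:H-M=1}. The hypothesis check you postponed is exactly where it fails to close. The mismatch is in $\kappa$, not in $p$; Theorem~\ref{thm:H-M>1} carries no extra restriction on $p$. Theorem~\ref{thm:H-M>1} is only available for $\kappa\in[0,\alpha)$. Theorem~\ref{thm:H-M=1} is stated for $\kappa\in[0,1)$, but it is proved through Theorem~\ref{thm:Huang_et_all}(b), which also requires $\kappa<\alpha$. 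So when $\alpha<1$ your reduction proves the statement only for $\kappa\in[0,\alpha)$, and the same is true of the paper's proof. When $\alpha\ge 1$ the argument is complete; this includes $\alpha=1$, the case used in Theorem~\ref{thm:application}.

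No other route can close this gap, because both parts are false when $\alpha p<\kappa<1$. This range is nonempty once $\alpha p<1$, e.g.\ $n=2$, $\alpha=1/10$, $p=2$, $\kappa=1/2$, with $q=2$ in (a).

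\emph{Counterexample.} Take $f=\sum_{i=1}^N\chi_{B(x_i,1)}$ with $x_i=(4^i,0,\dots,0)$.
\begin{itemize}
\item For $x\in B(x_i,1)$ and $r\in(2,4)$ we have $B(x,r)\supset B(x_i,1)$. Hence $\W^f_{\alpha,p}\ge c_0>0$ on $\bigcup_i B(x_i,1)$.
\item A mass distribution argument gives $\Ha^\delta_\infty\big(\bigcup_iB(x_i,1)\big)\ge c_\delta N$ for every $\delta\in(0,n]$.
\item Therefore the $s$-th power of each left-hand side is $\gtrsim N$.
\item The $s$-th power of the right-hand side is $\lesssim N^{(n-\kappa q)/(n-\alpha pq)}$, with $q=1$ in (b).
\item That exponent is $<1$ precisely when $\kappa>\alpha p$, so the inequality fails as $N\to\infty$.
\end{itemize}

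Two smaller slips.
\begin{itemize}
\item $(\I_\alpha f)^{1/(p-1)}$ lies in $L^{q^*(p-1)}$, not in $L^{q^*/(p-1)}$. With the corrected exponent, your condition that it exceed $1$ is indeed equivalent to $p>1+1/q-\alpha/n$.
\item In your sketch of $\W\lesssim\V$, Jensen's inequality points the wrong way for $p>2$ and the right way for $p<2$, so the two cases are swapped. Jensen is not needed at all. For $r\le|x-z|<2r$ one has $\I_\alpha f(z)\ge(3r)^{\alpha-n}\int_{B(x,r)}|f|$, and summing over dyadic annuli gives $\W^f_{\alpha,p}\lesssim\V^f_{\alpha,p}$ for every $p>1$.
\end{itemize}
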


If 
 $q>1$,
 we point out that the exponent $s= \frac{ q(p-1)(n-\kappa q)}{n-\alpha pq}$ is 
sharp. We refer to   Remark~\ref{rem:sharp-2}.

As an application we give new  integrability results  in the context of Choquet integrals to the solutions of the $p$-Laplace equation, $p>1$,
\begin{equation*}
-\operatorname{div}\big( |Du|^{p-2} Du \big)=f
\end{equation*}
 defined on an open set $\Omega$, 
when  a given  function $f$ is having certain integrability. We show  
that  the solution belongs to 
$L^{s}_{\loc}(\Omega, \Ha^{\delta}_\infty)$  with some values of  $s$ depending on the parameters involved, Theorem \ref{thm:application}.

The outline of the paper is as follows: We recall definitions  and give some auxiliary results in Section \ref{sec:pre}.
The maximal operator results are considered in Section \ref{sec:maximal}.
The mapping properties of the Havin--Maz'ya operator are studied in Section \ref{sec:integrability} 
as well as the theorem for the Wolff potential  is proved there. We  study an  application to the $p$-Laplace equation  in Section \ref{sec:pde}.

\section{Preliminaries}\label{sec:pre}

We recall the definition of the  $\delta$-dimensional Hausdorff content for every given set $E$ 
in $\Rn$,  
\cite{Adams1998, Adams2015}
and also  its dyadic counterpart  \cite{YangYuan08}.
An  open ball centred at $x$ with radius $r>0$ is written as $B(x,r)$.

Let $E$ be a set in $\Rn$, $n \ge 1$. Suppose that
$\delta \in (0, n]$.
The $\delta$-dimensional Hausdorff content of $E$ is defined by
\begin{equation}
\Ha_\infty^{\delta} (E) := \inf \bigg\{ \sum_{i=1}^\infty r_i^{\delta}: E \subset \bigcup_{i=1}^\infty B(x_i, r_i)\bigg\}\,,\label{HausdorffC}
\end{equation}
where the infimum is taken over all  
countable (or finite) collections of balls  that cover $E$.
The quantity \eqref{HausdorffC} is also called  the $\delta$-Hausdorff capacity.

If the infimum is taken over all 
countably (or finite) collections of dyadic cubes such that the interior of the union of the cubes covers $E$, the 
dyadic counterpart of $\Ha_{\infty}^{\delta}$ 
is given in \cite{YangYuan08}, that is,
\begin{equation*}
\tilde{\Ha}_\infty^{\delta} (E) := \inf \bigg\{ \sum_{i=1}^\infty \ell (Q_i)^{\delta}: E \subset \inte\big(\bigcup_{i=1}^\infty Q_i \big)\bigg\}\label{HausdorffCD}\,.
\end{equation*}
Here $\ell (Q)$ is the side length of a cube $Q$.

We recall that the Hausdorff content
$\tilde \Ha^\delta_\infty$ 
is a Choquet capacity \cite[Theorem~2.1]{YangYuan08}.  Hence, one of the  benefits of using $\tilde \Ha^\delta_\infty$  is that
for every 
increasing sequence of sets $E_i$  in $\Rn$ the following equality holds,
\begin{equation}\label{BenefitOfDyadicDef}
\lim_{i \to \infty}\tilde \Ha^\delta_\infty(E_i) = \tilde \Ha^\delta_\infty\big(\bigcup_{i=1}^\infty E_i \big).
\end{equation}
The corresponding equality fails for
$\Ha^{\delta}_\infty$.
However,
the Hausdorff content $\Ha^\delta_\infty$ is comparable with its dyadic counterpart  $\tilde \Ha^\delta_\infty$ and the constants in the corresponding inequalities  depend only on $n$. We refer to  \cite[Proposition~2.3]{YangYuan08}.
This means that we have the following lemma.

\begin{lemma}\label{lem:increasing_sequence}
Let  $n\geq 1$ and $\delta \in(0, n]$.  If  
sets  $E_i$ of $\Rn$ form an increasing sequence $(E_i)$,  then
\[
c_1(n) \,  \Ha^\delta_\infty \Big(\bigcup_{i=1}^\infty E_i \Big) \le \lim_{i \to \infty} \Ha^\delta_\infty(E_i) 
\le c_2(n) \,  \Ha^\delta_\infty \Big(\bigcup_{i=1}^\infty E_i \Big)\,,
\]
here  positive constants $c_1$ and $c_2$ depend  only on $n$.
\end {lemma}

We recall the definition of Choquet integral,  with respect to  the  Hausdorff content.
The Choquet integral  was introduced by G.\ Choquet \cite{Cho53} and  applied  by D.\ R.\ Adams  in the study of
nonlinear potential theory  \cite{Adams1988b, Adams1998}.
 If  $\Omega$ is a subset of $\Rn$, $n \ge 1$, and $\delta \in (0, n]$, then
for every   function $f:\Omega\to [- \infty, \infty]$ the integral in the sense of Choquet with respect to  the  $\delta$-dimensional  Hausdorff content is defined by
\begin{equation}\label{IntegralDef}
\int_\Omega |f(x)| \, d  \Ha^\delta_\infty := \int_0^\infty \Ha^\delta_\infty\big(\{x \in \Omega : |f(x)|>t\}\big) \, dt. 
\end{equation}
Since  $\Ha_\infty^{\delta}$ is  a monotone  set function,
the corresponding distribution function
$t \mapsto \Ha^\delta_\infty\big(\{x \in \Omega : |f(x)|>t\}\big)$ 
for each function $f:\Omega\to [-\infty, \infty]$
is decreasing with respect to $t$.
By decreasing property,  the  distribution function 
$t \mapsto \Ha^\delta_\infty\big(\{x \in \Omega : |f(x)|>t\}\big)$ is measurable
 with respect to  the 1-dimensional  Lebesgue measure. Thus, $\int_0^\infty 
 \Ha^\delta_\infty\big(\{x \in \Omega : |f(x)|>t\}\big) \, dt$ 
 is well defined as a Lebesgue integral. 
The right-hand side of \eqref{IntegralDef} can be understood  also as an improper Riemann integral, since a decreasing function is Riemann integrable.
We note that  
\[
\int_0^\infty \Ha^{\delta}_\infty\big(\{x \in \Omega : |f(x)|^p>t\}\big) \, dt = \int_0^\infty p t^{p-1}\Ha^{\delta}_\infty\big(\{x \in \Omega : |f(x)|>t\}\big) \, dt
\]
by a change of variables.

For more  basic properties of Choquet integrals with respect to the Hausdorff content we refer to \cite[Chapter 4]{Adams1998}, \cite[Chapter 2]{HH-S_JFA}, 
\cite[Section 2]{ChenSpector2023},
\cite[Section 2]{HH-S_La}.
Recently,  Choquet integrals with respect to the Hausdorff content  and other set functions have been studied   in many different contexts 
\cite{Den94, CerMS11, PonceSpector2020, PonceSpector2023, 
HH-S_AAG, HH-S_JFA, ChenSpector2023,  HH-S_La,  COS24, HKST2024, Ooi2024, Chen2025, BCRS2025, HH-S_JGA, ChenClaros2025, 
ChenClaros2026, HLLW2026}.

We recall the definitions of Lebesgue spaces in the context of Choquet integral and  Hausdorff content and give a small useful lemma.

Let  $n\geq 1$,  $\delta \in (0,n]$, and $q\in [\delta/n,\infty )$. If $f$ is a non-negative function defined on a set $\Omega$ in $\Rn$,
a Lebesgue space for Choquet integrals with respect to the Hausdorff content is written as
\begin{equation}\notag
\begin{split}
L^{q}(\Omega, \Ha^{\delta}_\infty) &:=
\Big\{f: \Omega \to [0, \infty] :      \int_\Omega |f|^{q} \, d \Ha^\delta_\infty            <\infty            \Big\}. 
\end{split}
\end{equation}
We write that
\[
\|f\|_{L^{q}(\Omega, \Ha^{\delta}_\infty)}:= \Big( \int_\Omega |f|^{q} \, d \Ha^\delta_\infty\Big)^{\frac1q},
\]
and note that it is a quasi-norm.

We define 
weak $L^q$-spaces by using the condition
\begin{equation*}
  \sup_{\lambda >0}\int_{\{x \in \Omega : \vert f(x)\vert>\lambda\}} \lambda^{q}\,d \Ha^{\delta}_\infty
  = \sup_{\lambda >0} \big( \lambda^q  \, \Ha^{\delta}_\infty(\{x \in \Omega : \vert f(x)\vert>\lambda\}) \big)<\infty.
\end{equation*}
We  write 
\[
\| f \|_{\wL^q(\Omega, \Ha^\delta_\infty)}:= \Big(\sup_{\lambda >0} \big( \lambda^q  \, \Ha^{\delta}_\infty(\{x \in \Omega : \vert f(x)\vert>\lambda\}) \big) \Big)^{\frac1q}
\]
and define the weak space as
\[
\wL^q(\Omega, \Ha^{\delta}_\infty):=
\Big\{f: \Omega \to [0, \infty] :          \| f \|_{\wL^q(\Omega, \Ha^\delta_\infty)}     <\infty            \Big\}.
\]
This definition can be found also in \cite[p. 356]{HKST2024}.  Note that $\| \cdot \|_{\wL^q(\Omega, \Ha^\delta_\infty)}$ is a quasi-norm in $\wL^p(\Omega, \Ha^{\delta}_\infty)$. 
We say that a function $u$ belongs to  the space $L^{q}_\loc(\Omega, \Ha^{\delta}_\infty)$,  if 
$u \in L^{q}(O, \Ha^{\delta}_\infty)$
for every open set $O$ compactly inside  $\Omega$.
Similarly, we define the space $\wL^{q}_\loc(\Omega, \Ha^{\delta}_\infty)$.

\begin{lemma}\label{lem:weak_subset}
Let $\Omega$ in  $\Rn$, $n\geq 1$,   be  a bounded set.
If
$\delta \in (0, n]$ and $q \in (\delta/n, \infty)$, then $\wL^q(\Omega, \Ha^{\delta}_\infty) \subset L^{\frac{\delta}{n}}(\Omega, \Ha^{\delta}_\infty) \subset L^1(\Omega, \Ha^{n}_\infty)$. 
\end{lemma}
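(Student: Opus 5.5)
Two inclusions have to be proved. For the first, $\wL^q(\Omega,\Ha^\delta_\infty)\subset L^{\delta/n}(\Omega,\Ha^\delta_\infty)$, the plan is to split the Choquet integral of $|f|^{\delta/n}$ at a height $t_0>0$ and use the layer-cake formula stated after \eqref{IntegralDef}:
\[
\int_\Omega |f|^{\delta/n}\, d\Ha^\delta_\infty=\int_0^\infty \tfrac{\delta}{n}\, t^{\frac{\delta}{n}-1}\Ha^\delta_\infty(\{x\in\Omega:|f(x)|>t\})\,dt .
\]
On $(0,t_0)$ the distribution function is at most $\Ha^\delta_\infty(\Omega)$. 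This is finite because $\Omega$ is bounded: $\Omega$ lies in a single ball $B(x_0,R)$, so $\Ha^\delta_\infty(\Omega)\le R^\delta$. Since $\delta/n>0$, the factor $t^{\delta/n-1}$ is integrable near $0$, so this part contributes at most $R^\delta t_0^{\delta/n}$. On $(t_0,\infty)$ we use the weak bound $\Ha^\delta_\infty(\{|f|>t\})\le \|f\|^q_{\wL^q}t^{-q}$. The integrand is then $t^{\delta/n-1-q}$, which is integrable at infinity exactly because $q>\delta/n$; this is the only place the strict inequality is needed. Adding the two pieces gives a finite bound, and choosing $t_0=\|f\|_{\wL^q}$ makes it explicit.

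For the second inclusion, $L^{\delta/n}(\Omega,\Ha^\delta_\infty)\subset L^1(\Omega,\Ha^n_\infty)$, the key step is a comparison of contents: for every set $E\subset\Rn$,
\[
\Ha^n_\infty(E)\le \big(\Ha^\delta_\infty(E)\big)^{n/\delta}.
\]
To prove it, take any cover of $E$ by balls $B(x_i,r_i)$. Since $n/\delta\ge1$, we have $\sum_i r_i^n=\sum_i (r_i^\delta)^{n/\delta}\le\big(\sum_i r_i^\delta\big)^{n/\delta}$, because the $\ell^1$ norm dominates the $\ell^{n/\delta}$ norm. Taking the infimum over covers gives the inequality. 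Applying it to $E=\{|f|>t\}$, we get
\[
\int_\Omega|f|\,d\Ha^n_\infty=\int_0^\infty \Ha^n_\infty(\{|f|>t\})\,dt\le\int_0^\infty \phi(t)^{n/\delta}\,dt,
\]
where $\phi(t)=\Ha^\delta_\infty(\{|f|>t\})$.

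The main obstacle is bounding $\int_0^\infty\phi^{n/\delta}\,dt$ by $\int_0^\infty \phi(t)\,d(t^{\delta/n})$, which is the Choquet integral of $|f|^{\delta/n}$ with respect to $\Ha^\delta_\infty$. This is a Lorentz-type embedding $L^{\delta/n,\delta/n}\subset L^{\delta/n,1}$ for the decreasing function $\phi$. The plan is to use monotonicity: since $\phi$ is decreasing,
\[
\phi(t)\,t^{\delta/n}\le \int_0^t \phi(s)\,d(s^{\delta/n})\le \int_\Omega |f|^{\delta/n}\,d\Ha^\delta_\infty=:A .
\]
Hence $\phi(t)^{n/\delta-1}\le (A t^{-\delta/n})^{n/\delta-1}=A^{n/\delta-1}t^{\delta/n-1}$. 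Substituting,
\[
\int_0^\infty\phi^{n/\delta}\,dt=\int_0^\infty\phi\cdot\phi^{n/\delta-1}\,dt\le A^{n/\delta-1}\int_0^\infty \phi(t)\,t^{\delta/n-1}\,dt=\tfrac{n}{\delta}A^{n/\delta},
\]
which is finite. If in addition one wants $\Omega$ itself to have finite content, boundedness of $\Omega$ again gives this.
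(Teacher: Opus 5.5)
Your proof is correct. The first inclusion is the paper's argument. You split the layer-cake integral, bound the distribution function by $\Ha^\delta_\infty(\Omega)<\infty$ below the cut and by $\|f\|^q_{\wL^q(\Omega,\Ha^\delta_\infty)}t^{-q}$ above it; the paper simply cuts at $t_0=1$. Your treatment of the lower piece, integrating the weight $\tfrac{\delta}{n}t^{\delta/n-1}$ against the constant bound, is cleaner than the paper's displayed intermediate step. That step omits the weight on $(0,1)$, although its final bound $\Ha^\delta_\infty(\Omega)$ is correct.

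The second inclusion is where you differ. The paper just quotes the capacitary inequality $\int_\Omega|f|\,d\Ha^n_\infty\lesssim\big(\int_\Omega|f|^{\delta/n}\,d\Ha^\delta_\infty\big)^{n/\delta}$ from \cite[Proposition 2.3]{COS24}. You prove it from scratch, using the set-function comparison $\Ha^n_\infty(E)\le\Ha^\delta_\infty(E)^{n/\delta}$ and the monotonicity of the distribution function. The citation is shorter. Your argument is self-contained, gives the explicit constant $n/\delta$, and shows that boundedness of $\Omega$ is needed only for the first inclusion.

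You can get a better constant by using the partial integral $F(t)=\int_0^t\phi(s)\,d(s^{\delta/n})$ in place of the total $A$. Since $\phi(t)t^{\delta/n}\le F(t)$, one has $\tfrac{d}{dt}F(t)^{n/\delta}\ge\phi(t)^{n/\delta}$ almost everywhere. Hence $\int_0^\infty\phi^{n/\delta}\,dt\le A^{n/\delta}$, with constant $1$. This constant is sharp: test with $f=\chi_B$ for a ball $B\subset\Omega$.
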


\begin{proof}
Suppose that $f \in \wL^p(\Omega, \Ha^{\delta}_\infty)$. 
Changing the variables and estimating the integrals, we obtain
\[
\begin{split}
&\int_\Omega |f|^{\frac{\delta}{n}} \, d \Ha^\delta_\infty = \int_0^\infty {\tfrac{\delta}{n}} t^{{\frac{\delta}{n}}-1} \Ha^\delta_\infty \big(\{x \in \Omega : |f(x)|>t \} \big) \, dt\\
&\quad\le \int_0^1 \Ha^\delta_\infty \big(\{x \in \Omega : |f(x)|>t \} \big) \, dt +   
\int_1^\infty {\tfrac{\delta}{n}} t^{{\frac{\delta}{n}}-1-q}   \| f \|_{\wL^q(\Omega, \Ha^\delta_\infty)}^q \, dt\\
&\quad\le \Ha^\delta_\infty (\Omega) +  {\tfrac{\delta}{n}}  \| f \|_{\wL^q(\Omega, \Ha^\delta_\infty)}^q  \frac{1}{q-\delta/n}<\infty.
\end{split}
\]
Hence $\wL^q(\Omega, \Ha^{\delta}_\infty) \subset L^{\frac{\delta}{n}}(\Omega, \Ha^{\delta}_\infty)$.  
Since
\[
\int_\Omega |f| \, d \Ha^n_\infty \le \bigg(\frac{n}{\delta} \bigg)^{\frac1n} \bigg(\int_\Omega |f|^{\frac{\delta}{n}} \, d \Ha^\delta_\infty\bigg)^{\frac{n}{\delta}},
\]
by \cite[Proposition 2.3]{COS24},
 also the second inclusion of the lemma  follows.
\end{proof}

\begin{remark}Throughout the paper  letter $c$ is used as  a generic notion for constants.
The notion $A \lesssim B$ for given quantities $A$ and $B$ means that there exists a finite constant $c>0$ such that $A\le c B$.
\end{remark}

\section{Results for maximal operators and Riesz potentials}\label{sec:maximal}

 Let $n\geq 2$ and $\kappa \in [0, n)$. 
 If   $f$ is locally integrable on $\Rn$, that is
$f \in L^1_\loc(\Rn)$,  we define
\[
\M_\kappa f(x) := \sup_{r>0} \frac{r^\kappa}{|B(x, r)|} \int_{B(x, r)} |f(y)| \, dy,
\]
and  write $\M f := \M_0 f$.

We show that the maximal function 
 $\M_\kappa f $
 belongs to the weak $L^1(\Rn, \Ha^{n-\kappa}_\infty)$ space.
The result is a special case   of \cite[Theorem 7 (ii)]{Adams1998}  by Adams.
As a proof is missing there,
we give a proof for the reader’s convenience.

\begin{theorem}[Theorem 7 (ii) of \cite{Adams1998}]\label{thm:weak_estimate}
Let $\kappa \in [0, n)$. If $f \in L^1(\Rn )$, then for every $\lambda >0$
\[
\Ha^{n- \kappa}_\infty \big( \{x \in \Rn : \M_\kappa f(x) > \lambda\}\big) 
\le \frac{c}{\lambda} \int_{\{x \in \Rn :  |f| > \lambda /4\}} |f(y)| \, dy\,,
\]
where $c$ depends only on $n$.
\end{theorem}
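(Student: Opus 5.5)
The plan is a Vitali covering argument combined with the usual splitting of $f$ at height $\lambda/4$. Fix $\lambda>0$ and put $E_\lambda:=\{x\in\Rn:\M_\kappa f(x)>\lambda\}$. For every $x\in E_\lambda$ I choose a ball $B_x=B(x,r_x)$ with
\[
\frac{r_x^{\kappa}}{|B(x,r_x)|}\int_{B(x,r_x)}|f(y)|\,dy>\lambda .
\]
Since $f\in L^1(\Rn)$, this gives $r_x^{n-\kappa}<c(n)\|f\|_{L^1(\Rn)}/\lambda$. Hence the radii are uniformly bounded, because $\kappa<n$.

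The Vitali $5r$-covering lemma then gives a countable disjoint subfamily $\{B_i\}$, $B_i=B(x_i,r_i)$, such that $E_\lambda\subset\bigcup_i B(x_i,5r_i)$. The definition \eqref{HausdorffC} of the Hausdorff content yields
\[
\Ha^{n-\kappa}_\infty(E_\lambda)\le 5^{n-\kappa}\sum_i r_i^{n-\kappa}.
\]
So it suffices to bound each $r_i^{n-\kappa}$ by $\frac{c}{\lambda}\int_{B_i\cap\{|f|>\lambda/4\}}|f|\,dy$. Summing over the disjoint balls $B_i$ then gives the claim.

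For a single selected ball $B=B(x,r)$ I split $|f|=|f|\chi_{\{|f|>\lambda/4\}}+|f|\chi_{\{|f|\le\lambda/4\}}$. The second piece contributes at most $\frac{\lambda}{4}|B|$ to $\int_B|f|$. The selection inequality then reads
\[
\lambda|B|\,r^{-\kappa}<\int_{B\cap\{|f|>\lambda/4\}}|f|\,dy+\tfrac{\lambda}{4}|B| .
\]
When $\kappa=0$ the last term is absorbed into the left-hand side. This gives $\tfrac{3\lambda}{4}|B|<\int_{B\cap\{|f|>\lambda/4\}}|f|$, that is $r^{n}\lesssim\lambda^{-1}\int_{B\cap\{|f|>\lambda/4\}}|f|$, which is exactly what is needed.

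The main obstacle is this absorption step when $\kappa>0$. There the comparison is between $\lambda r^{-\kappa}|B|$ and $\tfrac{\lambda}{4}|B|$, and absorption works only for radii $r$ with $r^{\kappa}\le 2$, say. In that case $\tfrac{\lambda}{2}|B|r^{-\kappa}<\int_{B\cap\{|f|>\lambda/4\}}|f|$, which gives $r^{n-\kappa}\lesssim\lambda^{-1}\int_{B\cap\{|f|>\lambda/4\}}|f|$.

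For large radii the truncation cannot be recovered in general, and I would check this before going further. Take $f=\varepsilon\chi_{B(0,R)}$ with $\varepsilon<\lambda/4$. Then $\M_\kappa f(0)\gtrsim\varepsilon R^{\kappa}>\lambda$ for $R$ large, so the left-hand side of the statement is positive while the right-hand side vanishes. The constant $\lambda/4$ is what fails here: any truncation height that does not depend on $r$ runs into the same problem once $r^{\kappa}$ is large.

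So my plan is to prove the statement as written for $\kappa=0$, and for $\kappa>0$ in the form obtained by restricting the supremum in $\M_\kappa$ to radii $r\le 1$ (or by replacing the truncation level $\lambda/4$ with $\lambda r^{-\kappa}/4$ inside each ball). For unrestricted radii with $\kappa>0$ I would fall back on the untruncated bound $\frac{c}{\lambda}\|f\|_{L^1(\Rn)}$. This is the point where I expect the statement to need adjusting to match what Adams's Theorem 7 (ii) actually asserts.
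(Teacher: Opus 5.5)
Your analysis is correct. The defect you found lies in the statement and in the paper's proof, not in your argument.

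The paper first proves the untruncated bound \eqref{equ:weak_estimate_2}, i.e.\ $\Ha^{n-\kappa}_\infty(\{\M_\kappa f>\lambda\})\le \frac{c(n)}{\lambda}\|f\|_{L^1(\Rn)}$, by essentially your covering argument; this is exactly your fallback. It localizes to $A_\lambda\cap B(0,k)$ and lets $k\to\infty$ via Lemma~\ref{lem:increasing_sequence}. Your direct application of the Vitali lemma to all of $E_\lambda$ is equally legitimate, since uniformly bounded radii suffice and a disjoint family of open balls in $\Rn$ is countable.

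The paper then truncates globally. With $f_1=|f|\chi_{\{|f|>\lambda/4\}}$ and $f_2=|f|\chi_{\{|f|\le\lambda/4\}}$, it uses $\M_\kappa f\le \M_\kappa f_1+\M_\kappa f_2$ and asserts $\{\M_\kappa f_2>\lambda/2\}=\emptyset$ because $f_2\le\lambda/4$. That assertion amounts to $\M_\kappa f_2\le \|f_2\|_{L^\infty(\Rn)}$, which holds only for $\kappa=0$. For $\kappa>0$ one only has $\M_\kappa f_2(x)\le \frac{\lambda}{4}\sup_{r>0}r^\kappa=\infty$.

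Your function $\varepsilon\chi_{B(0,R)}$ with $\varepsilon<\lambda/4$ is precisely a case where $f=f_2$. For $|x|<R/2$, the ball $B(x,R/2)$ gives $\M_\kappa f(x)\ge \varepsilon (R/2)^\kappa>\lambda$ once $R$ is large. So the left-hand side is at least $\Ha^{n-\kappa}_\infty(B(0,R/2))=(R/2)^{n-\kappa}>0$, while the right-hand side is $0$. Hence the statement fails for every $\kappa\in(0,n)$, and that step of the paper cannot be repaired.

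For $\kappa=0$ the two routes agree in substance. The paper applies the untruncated estimate to $f_1$ at level $\lambda/2$. You instead absorb the small part inside each selected ball, $\frac{3\lambda}{4}|B|<\int_{B\cap\{|f|>\lambda/4\}}|f|\,dy$, which gives the truncated bound in a single covering step.

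For $\kappa\in(0,n)$, what is actually true is the untruncated inequality \eqref{equ:weak_estimate_2}, or your variants with radii $r\le 1$ or truncation level $\lambda r^{-\kappa}/4$. The untruncated bound already gives the paper's stated aim that $\M_\kappa f\in\wL^1(\Rn,\Ha^{n-\kappa}_\infty)$. Theorem~\ref{thm:weak_estimate} is not invoked anywhere later in the paper, so weakening it to the untruncated form does not affect the subsequent results.
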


\begin{proof}
Suppose that $f \in L^1(\Rn)$ and $\kappa\in [0,n)$. Let $\lambda >0$ be fixed.
We write
\begin{equation}\label{equ:Alambda}
A_\lambda := \{x \in \Rn : \M_\kappa f(x) > \lambda\}.
\end{equation}
For every $x \in A_\lambda$ we find $r_x >0$ such that
\begin{equation}\label{findradius}
\frac{r_x^\kappa}{|B(x, r_x)|} \int_{B(x, r_x)} |f(y)| \, dy > \lambda.
\end{equation}

We use \eqref{equ:Alambda} and \eqref{findradius} to find a suitable bounded set in order to apply the $5$-covering lemma.
We consider the sets 
$A_\lambda \cap B(0, k)$ for sufficiently large $k>0$.
Let $k$ be given.
We let
$\mathcal{F}$ be a collection of balls with their centres  being in  $ A_\lambda \cap B(0, k)$
and with their radii satisfying the inequality \eqref{findradius}. For these balls,
\[
r_x^{n- \kappa} < \frac{c(n)}{\lambda} \int_{B(x, r_x)} |f(y)| \, dy \le \frac{c(n)}{\lambda} \int_{\Rn} |f(y)| \, dy\,.
\]
Thus, the  values of  radii $r_x$   are  uniformly bounded. 
Hence,  the diameter, $\diam\big(\bigcup_{x \in \mathcal{F}} B(x, r_x)\big) $ is finite.
Now we can apply the $5r$-covering lemma \cite[1.6]{Stein} and obtain pairwise disjoint balls $B(x_i, r_i)$, $i=1, 2, \ldots$\,,  such that
\[
A_\lambda \cap B(0, k) \subset \bigcup_{i=1}^\infty B(x_i, 5r_i). 
\] 
By the definition of the Hausdorff content   and the inequality \eqref{findradius} 
we obtain
\[
\begin{split}
\Ha^{n- \kappa}(A_\lambda \cap B(0, k)) \le \sum_{i=1}^\infty (5r_i)^{n-\kappa} 
\le 5^n \sum_{i=1}^\infty \frac{c(n)}{\lambda} \int_{B(x_i, r_i)} |f(y)| \, dy.
\end{split}
\]
Since the balls $B(x_i, r_i)$ are  pairwise  disjoint,
the properties of the Lebesgue integral
imply that
\[
\Ha^{n- \kappa}_\infty(A_\lambda \cap B(0, k)) \le  
\frac{c(n)}{\lambda} \int_{\bigcup_{i=1}^\infty B(x_i, r_i)} |f(y)| \, dy
\le \frac{c(n)}{\lambda} \int_{\Rn} |f(y)| \, dy.
\]
Thus by Lemma~\ref{lem:increasing_sequence}
\begin{equation}\label{equ:weak_estimate_2}
\Ha^{n- \kappa}_\infty(A_\lambda) \le c(n) \lim_{k \to \infty}\Ha^{n- \kappa}_\infty(A_\lambda \cap B(0, k))
\le \frac{c(n)}{\lambda} \int_{\Rn} |f(y)| \, dy.
\end{equation}

To finish the proof we
 write first that $f_1(x) := |f(x)| \chi_{|f| > \lambda /4}(x)$ and
$f_2(x) := |f(x)| \chi_{0 \le |f| \le \lambda /4}(x)$. Now, $|f(x)|= f_1(x) + f_2(x)$. Since $\M_\kappa$ is a sublinear operator,
$\M_\kappa f(x) \le \M_\kappa f_1(x) + \M_\kappa f_2(x)$. This implies that
\[
A_{\lambda}=
\{\M_\kappa f(x)>\lambda\} \subset \{\M_\kappa f_1(x)>\lambda/2\} \cup \{\M_\kappa f_2(x)>\lambda/2\} \,.
\]
Since $|f_2|\le \lambda/4$, we have
$\{\M_\kappa f_2(x)>\lambda/2\}=\emptyset$.
Hence, 
 we may use 
the estimate \eqref{equ:weak_estimate_2}  
for $f_1$  and obtain
\[
\begin{split}
\Ha^{n- \kappa}_\infty \big(\{\M_\kappa f(x)>\lambda\}\big) 
&=\Ha^{n- \kappa}_\infty (A_{\lambda}) 
\le \Ha^{n- \kappa}_\infty \big(\{\M_\kappa f_1(x)>\lambda/2\}\big) \\
\le  \frac{c}{\lambda} \int_{\Rn} |f_1(y)| \, dy
& = \frac{c}{\lambda} \int_{\{x \in \Rn :  |f| > \lambda /4\}} |f(y)| \, dy. \qedhere
\end{split}
\]
\end{proof}

We recall the following theorem considering the estimates for weak $L^p$ spaces  of 
Naoya Hatano, Ryota Kawasumi,   Hiroki  Saito,  and Hitoshi  Tanaka \cite{HKST2024}.

\begin{theorem}[Theorem 1.1 (i)  of \cite{HKST2024}]\label{thm:weak-to-weak}
Let $\delta \in (0, n]$ and $q \in (\frac{\delta}{n}, \infty)$.
Then there exists a constant $c$ such that
\[
\| \M f \|_{\wL^q(\Rn, \Ha^\delta_\infty)} \le c \| f \|_{\wL^q(\Rn, \Ha^\delta_\infty)}
\]
for all  $f \in  \wL^q(\Rn, \Ha^{\delta}_\infty)$.
\end{theorem}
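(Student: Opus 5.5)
The plan is to deduce the weak-to-weak bound from a weak-type estimate at the endpoint exponent $\delta/n$, combined with a level-set truncation. The truncation is the same one used at the end of the proof of Theorem~\ref{thm:weak_estimate}. The endpoint estimate I intend to use is the capacitary weak-type inequality of Orobitg and Verdera (see also \cite[Theorem 7]{Adams1998}):
\begin{equation*}
\Ha^\delta_\infty\big(\{x\in\Rn : \M f(x)>t\}\big)\le \frac{c}{t^{\delta/n}}\int_\Rn |f|^{\frac{\delta}{n}}\, d\Ha^\delta_\infty, \qquad t>0 .
\end{equation*}
This plays the role that the $L^1$ weak-type estimate plays in the classical Marcinkiewicz argument. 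If it is taken as known, the rest is a direct computation.

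Fix $\lambda>0$ and set $f_1:=|f|\chi_{\{|f|>\lambda/2\}}$ and $f_2:=|f|\chi_{\{|f|\le\lambda/2\}}$. By sublinearity, $\M f\le \M f_1+\M f_2$. Since $\M f_2\le\lambda/2$ everywhere, we get
\[
\{\M f>\lambda\}\subset\{\M f_1>\lambda/2\}.
\]
The endpoint estimate then gives
\[
\Ha^\delta_\infty(\{\M f>\lambda\})\lesssim \lambda^{-\delta/n}\int_\Rn f_1^{\delta/n}\,d\Ha^\delta_\infty .
\]

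Next I compute the Choquet integral via the layer-cake formula \eqref{IntegralDef} with the change of variables noted in Section~\ref{sec:pre}. For $t>0$ we have $\{f_1>t\}=\{|f|>\max(t,\lambda/2)\}$, so
\[
\int_\Rn f_1^{\delta/n}\,d\Ha^\delta_\infty \le (\lambda/2)^{\delta/n}\,\Ha^\delta_\infty(\{|f|>\lambda/2\}) +\int_{\lambda/2}^\infty \tfrac{\delta}{n}\,t^{\frac{\delta}{n}-1-q}\,\|f\|^q_{\wL^q(\Rn,\Ha^\delta_\infty)}\,dt .
\]
The first term is at most $c\,\lambda^{\delta/n-q}\|f\|^q_{\wL^q}$ by the definition of the weak quasi-norm. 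The second integral converges precisely because $q>\delta/n$, and it equals $c(q,\delta,n)\,\lambda^{\delta/n-q}\|f\|^q_{\wL^q}$. This is the same computation as in the proof of Lemma~\ref{lem:weak_subset}, and it is where the hypothesis $q>\delta/n$ enters. Multiplying by $\lambda^{-\delta/n}$ gives
\[
\lambda^q\,\Ha^\delta_\infty(\{\M f>\lambda\})\le c\,\|f\|^q_{\wL^q(\Rn,\Ha^\delta_\infty)} .
\]
Taking the supremum over $\lambda$ and then $q$-th roots yields the theorem.

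The main obstacle is the endpoint weak-type inequality, which is not proved earlier in the paper. If it has to be justified rather than quoted, I would first replace $\Ha^\delta_\infty$ by the dyadic content $\tilde\Ha^\delta_\infty$, at the cost of constants depending only on $n$, and work with the dyadic maximal function. Given a cover of $\{|f|^{\delta/n}>s\}$ by dyadic cubes, one shows that each cube $Q$ contributes to the level set $\{\M f>t\}$ only through dyadic ancestors $Q'$ with $|Q'|\lesssim t^{-1}\int_{Q'}|f|$. The key ingredient is the inequality $\sum \ell(Q_j)^{n}\le\big(\sum\ell(Q_j)^\delta\big)^{n/\delta}$, which is the source of the exponent $\delta/n$. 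Selecting maximal such ancestors, as in the covering step of Theorem~\ref{thm:weak_estimate}, and summing $\ell(Q')^\delta$ then gives the bound. Finally, Lemma~\ref{lem:increasing_sequence} handles the passage from truncated level sets to the full one.
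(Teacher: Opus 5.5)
The paper gives no proof of this statement; it imports it from \cite{HKST2024}. Your argument is a correct, self-contained reduction of it to a different external result, the endpoint capacitary weak-type $(\delta/n,\delta/n)$ inequality of Orobitg and Verdera.

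The steps check out:
\begin{itemize}
\item $\M f_2\le\lambda/2$ gives $\{\M f>\lambda\}\subset\{\M f_1>\lambda/2\}$.
\item The identity $\{f_1>t\}=\{|f|>\max(t,\lambda/2)\}$ is right.
\item The layer-cake computation yields $\int_\Rn f_1^{\delta/n}\,d\Ha^\delta_\infty\le \frac{q}{q-\delta/n}(\lambda/2)^{\delta/n-q}\|f\|^q_{\wL^q(\Rn,\Ha^\delta_\infty)}$. This gives the theorem with the explicit constant $\big(2^q c\,q/(q-\delta/n)\big)^{1/q}$.
\end{itemize}
Compared with the bare citation, your route makes the weak-to-weak bound a formal consequence of the endpoint estimate. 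It uses the same truncation the paper already uses in the proof of Theorem~\ref{thm:weak_estimate}. It also shows that $q>\delta/n$ is needed only for the convergence of $\int_{\lambda/2}^\infty t^{\delta/n-1-q}\,dt$. Like the paper, though, it is complete only modulo a quoted theorem.

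The sketch in your last paragraph, by contrast, would not prove the endpoint inequality as written. Summing $\ell(Q')^\delta$ over selected maximal cubes cannot work in general. Take $f=\chi_E$, where $E$ is the union of the $N^n$ dyadic cubes of side $2^{-j}/N$ with corners in $N^{-1}\mathbb{Z}^n\cap[0,1)^n$ ($N=2^m$, $j\ge 1$). The maximal dyadic cubes with average $>1/2$ are exactly these small cubes. So $\sum\ell(Q')^\delta=N^{n-\delta}2^{-j\delta}\to\infty$ as $N\to\infty$ when $\delta<n$, while $\int_\Rn|f|^{\delta/n}\,d\Ha^\delta_\infty=\Ha^\delta_\infty(E)\lesssim 1$. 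Moreover, these cubes are descendants, not ancestors, of the near-optimal covering cube $[0,1)^n$.

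One has to bound the content of the union instead. Take near-optimal dyadic covers $\mathcal{R}_k$ of $\{|f|>2^k\}$ for $2^k>t/4$. The maximal cubes lying inside some $R\in\mathcal{R}_k$ are absorbed by $R$. For each remaining maximal cube $Q'$, combine three facts:
\begin{itemize}
\item $t|Q'|\lesssim\sum_{2^k>t/4}2^k|Q'\cap\{|f|>2^k\}|$;
\item the subadditivity of $s\mapsto s^{\delta/n}$;
\item your inequality $\sum\ell(R)^n\le\big(\sum\ell(R)^\delta\big)^{n/\delta}$, applied over the cubes $R\in\mathcal{R}_k$ contained in $Q'$.
\end{itemize}
Passing from $\M$ to the dyadic maximal operator also needs a step such as $\{\M f>c_nt\}\subset\bigcup_j 3Q_j$.
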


The next theorem gives mapping properties for the Riesz potential.
The result comes from the paper of
Yuonshou Cao, Long Huang, Dachun Yang, and Ciqiang  Zhuo \cite{CHYZ}.

\begin{theorem}[Theorem 1.9 of \cite{CHYZ}]\label{thm:Huang_et_all}
Let $\alpha \in (0,n)$, $\delta \in (0,n]$, and $\kappa \in [0,\alpha)$.

(a) If $q \in \left(\frac{\delta}{n}, \frac{\delta}{\alpha}\right)$ ,
then there exists a positive constant $c$, depending only on $n$, $\alpha$, $\kappa$, and $\delta$, such that, for every $f \in L^1_{\mathrm{loc}}(\Rn)$,
\[
\| I_\alpha f \|_{L^{\frac{q(\delta-\kappa q)}{\delta-\alpha q}}(\Rn, \Ha^{\delta-\kappa q}_\infty)}
\le c \| f \|_{L^{q}(\Rn, \Ha^\delta_\infty)}.
\]

(b) If $q = \frac{\delta}{n}$, then there exists a positive constant $c$, depending only on $n$, $\alpha$, $\kappa$, and $\delta$, such that, for every $f \in L^1_{\mathrm{loc}}(\Rn)$,
\[
\| I_\alpha f \|_{\wL^{\frac{q(\delta-\kappa q)}{\delta-\alpha q}}(\Rn, \Ha^{\delta-\kappa q}_\infty)}
\le c \| f \|_{L^{q}(\Rn, \Ha^\delta_\infty)}.
\]
\end{theorem}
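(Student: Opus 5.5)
The plan is a Hedberg-type pointwise inequality that splits $\I_\alpha f$ into a local part, controlled by the fractional maximal function $\M_\kappa f$, and a global part, controlled by the Choquet norm of $f$. Integrability of $\M_\kappa f$ on $L^q(\Rn,\Ha^\delta_\infty)$ then transfers to $\I_\alpha f$. Fix $x$ and $r>0$ and split the integral over dyadic annuli $2^{k-1}r\le |x-y|<2^k r$.

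For the annuli with $k\le 0$ I would bound
\[
\sum_{k\le 0}(2^kr)^{\alpha-n}\int_{B(x,2^kr)}|f|\,dy\;\lesssim\;\sum_{k\le 0}(2^kr)^{\alpha-\kappa}\M_\kappa f(x)\;\lesssim\; r^{\alpha-\kappa}\M_\kappa f(x),
\]
where the geometric series converges because $\kappa<\alpha$. For $k\ge1$ I would use a Choquet--H\"older estimate for balls:
\[
\int_B|f|\,dy\lesssim \int_B |f|\,d\Ha^n_\infty\lesssim R^{\,n-\delta/q}\Big(\int_B|f|^q\,d\Ha^\delta_\infty\Big)^{1/q}, \qquad B=B(x,R),\ q\ge \delta/n.
\]
This is in the spirit of the inequality from \cite{COS24} used in Lemma~\ref{lem:weak_subset}, localized to a ball. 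It gives the far part $\lesssim r^{\alpha-\delta/q}\|f\|_{L^q(\Rn,\Ha^\delta_\infty)}$, which converges since $\alpha q<\delta$. Optimizing in $r$ yields
\[
\I_\alpha f(x)\lesssim (\M_\kappa f(x))^{\theta}\,\|f\|_{L^q(\Rn,\Ha^\delta_\infty)}^{1-\theta},\qquad \theta=\frac{\delta/q-\alpha}{\delta/q-\kappa}.
\]
With $s=\frac{q(\delta-\kappa q)}{\delta-\alpha q}$ one checks $s\theta=q$. Hence both (a) and (b) reduce to mapping properties of $\M_\kappa$ from $L^q(\Rn,\Ha^\delta_\infty)$ to $L^q(\Rn,\Ha^{\delta-\kappa q}_\infty)$: the strong form for (a) and the weak form for (b).

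For the weak estimate I would mimic the proof of Theorem~\ref{thm:weak_estimate}. Each point of $\{\M_\kappa f>\lambda\}\cap B(0,k)$ lies in a ball with $\lambda r^{n-\kappa}\lesssim\int_B|f|\,dy\lesssim r^{n-\delta/q}(\int_B|f|^q d\Ha^\delta_\infty)^{1/q}$. This gives $r^{\delta-\kappa q}\lesssim \lambda^{-q}\int_B|f|^q\,d\Ha^\delta_\infty$, and the radii are uniformly bounded. The $5r$-covering lemma then produces disjoint balls $B_i$, and I need
\[
\sum_i\int_{B_i}|f|^q\,d\Ha^\delta_\infty\lesssim \int_{\Rn}|f|^q\,d\Ha^\delta_\infty .
\]
This is quasi-superadditivity of the Choquet integral over disjoint balls. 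I would obtain it by passing to the dyadic content $\tilde\Ha^\delta_\infty$, or an equivalent strongly subadditive capacity, for which such packing inequalities hold up to dimensional constants. Lemma~\ref{lem:increasing_sequence} then removes the restriction to $B(0,k)$. Taking $q=\delta/n$ gives (b) directly via the pointwise inequality.

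For the strong estimate with $q>\delta/n$, I expect the main obstacle to be upgrading weak type to strong type, since Marcinkiewicz interpolation is delicate for Choquet integrals. My first attempt would apply the weak bound at a smaller exponent $q_0\in(\delta/n,q)$ to the truncated function $f\chi_{\{|f|>\lambda/4\}}$, exactly as in the last step of Theorem~\ref{thm:weak_estimate}. I would then integrate in $\lambda$ with weight $\lambda^{q-1}$ and interchange via the layer-cake formula. The interchange again requires a Fubini-type inequality for $\tilde\Ha^\delta_\infty$, which holds up to constants for the dyadic content. As a fallback, I would dominate $\M_\kappa f\lesssim (\M(|f|^{q_0}))^{1/q_0}$-type expressions restricted to $\Ha^{\delta-\kappa q}$ level sets. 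I would then use the strong boundedness of $\M$ on $L^{q/q_0}(\Rn,\Ha^\delta_\infty)$ (Orobitg--Verdera), which is the strong counterpart of Theorem~\ref{thm:weak-to-weak}.
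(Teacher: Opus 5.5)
Your Hedberg-type reduction is correct. The near/far splitting, the localized bound $\int_B|f|\,dy\lesssim R^{n-\delta/q}\big(\int_B|f|^q\,d\Ha^\delta_\infty\big)^{1/q}$ (from \cite[Proposition 2.3]{COS24} plus a Chebyshev--H\"older step on $B$), and the bookkeeping $s\theta=q$ do reduce (a) and (b) to bounds for $\M_\kappa\colon L^q(\Rn,\Ha^\delta_\infty)\to L^q(\Rn,\Ha^{\delta-\kappa q}_\infty)$: strong for $q>\delta/n$, weak for $q=\delta/n$. The paper gives no proof of its own; it quotes \cite[Theorem 1.9]{CHYZ}, taking equal Lorentz indices in (a). So everything rests on your proof of these maximal bounds, and that is where the gap is.

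The packing inequality $\sum_i\int_{B_i}|f|^q\,d\Ha^\delta_\infty\lesssim\int_{\Rn}|f|^q\,d\Ha^\delta_\infty$ over pairwise disjoint balls is false for $\delta<n$, for $\Ha^\delta_\infty$ and $\tilde\Ha^\delta_\infty$ alike. Take $f=\chi_E$, where $E$ is the union of $N^n$ disjoint balls $B_i$ of radius $1/(4N)$ centred at grid points of $[0,1]^n$. The left side is about $N^{n-\delta}\to\infty$, while the right side is at most $\Ha^\delta_\infty([0,1]^n)\lesssim 1$. Strong subadditivity bounds the content of a union from above, never from below. In Theorem~\ref{thm:weak_estimate} this step works only because $\delta=n$ and $q=1$, where the Choquet integral is the Lebesgue integral.

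The Vitali scheme is lossy even before that step. For sparse grids of tiny balls, $\sum_i(5r_i)^{\delta-\kappa q}$ exceeds $\lambda^{-q}\int|f|^q\,d\Ha^\delta_\infty$ by an unbounded factor, even with near-maximal radii, because the level set is covered far more cheaply by one large ball. What is missing is a genuinely capacitary argument for $\M_\kappa$, of the kind behind \cite[Theorem 7]{Adams1998} and the Orobitg--Verdera theorem. There one dominates $|f|\le\sum_{k,j}2^{k+1}\chi_{Q_{k,j}}$ by dyadic cubes nearly realizing $\tilde\Ha^\delta_\infty(\{|f|>2^k\})$, and does the packing on these cubes rather than on Vitali balls.

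Your strong-type step also fails for $\kappa>0$ in two further ways. First, the truncation borrowed from the end of the proof of Theorem~\ref{thm:weak_estimate} needs $\{\M_\kappa f_2>\lambda/2\}=\emptyset$ whenever $|f_2|\le\lambda/4$. That holds only for $\kappa=0$: for $f_2=\frac{\lambda}{4}\chi_{B(0,R)}$ one has $\M_\kappa f_2(0)\ge R^\kappa\lambda/4$. Second, a weak bound at $q_0<q$ controls $\Ha^{\delta-\kappa q_0}_\infty$ of level sets, whereas you need the lower-dimensional content $\Ha^{\delta-\kappa q}_\infty$. The higher-dimensional content does not control it: the general relation $\Ha^{\beta'}_\infty\le(\Ha^{\beta}_\infty)^{\beta'/\beta}$ for $\beta<\beta'$ goes the wrong way. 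For the same reason, the Orobitg--Verdera bound for $\M$, which keeps the dimension $\delta$ fixed, cannot by itself produce the drop to $\delta-\kappa q$. So the strong estimate for $q>\delta/n$ also needs its own capacitary proof, or the Choquet--Lorentz machinery of \cite{CHYZ}.
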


\begin{proof}
Part (a). The inequality follows from \cite[Theorem 1.9(i)]{CHYZ} whenever we choose both  indexes in the Lorentz space to be equal.

Part (b). The inequality is 
 from  \cite[Theorem 1.9(ii)]{CHYZ}.
\end{proof}

\section{Havin--Maz'ya potential}\label{sec:integrability}

Our first theorem follows  when we iterate 
Theorem~\ref{thm:Huang_et_all}.

\begin{theorem}\label{thm:H-M>1}
 Let  $n\geq 2$,  $\alpha \in (0, n)$,  and 
 $\kappa \in [0, \alpha)$. Suppose that  
 $q>1$   and  $p> 1$ satisfy  the inequalities $p> 1+ 1/q -\alpha/n$  and $\alpha pq < n$.
  If $f$ is a function in $L^{q}(\Rn)$, 
 then
 \[ 
  \|\V^f_{\alpha, p} \|_{L^s(\Rn,\Ha^{n- \kappa q}_\infty)}
  \le c \|f\|_{L^q(\Rn)}^{\frac{1}{p-1}},
  \]
where $s:= \frac{ q(p-1)(n-\kappa q)}{n-\alpha pq}$ and $c$ is a constant independent of $f$.
\end{theorem}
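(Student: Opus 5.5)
The plan is to apply Theorem~\ref{thm:Huang_et_all}(a) twice, once for each Riesz potential in $\V^f_{\alpha,p}=\I_\alpha\big((\I_\alpha f)^{1/(p-1)}\big)$, and then check the exponents. Throughout I use that $\Ha^n_\infty$ is comparable to Lebesgue measure with constants depending only on $n$. Hence $\|f\|_{L^q(\Rn,\Ha^n_\infty)}\approx\|f\|_{L^q(\Rn)}$ for $q\ge 1$.

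\emph{First step.} Apply Theorem~\ref{thm:Huang_et_all}(a) with $\delta=n$, $\kappa=0$ and exponent $q$. This is admissible since $1<q$ and $\alpha q<\alpha pq<n$ give $q\in(1,n/\alpha)$. It yields $\I_\alpha f\in L^{q_1}(\Rn)$ with $q_1=\frac{nq}{n-\alpha q}$ and $\|\I_\alpha f\|_{L^{q_1}}\lesssim\|f\|_{L^q}$. Put $g:=(\I_\alpha f)^{1/(p-1)}$. Then $g\in L^r(\Rn)=L^r(\Rn,\Ha^n_\infty)$ with
\[
r:=q_1(p-1)=\frac{nq(p-1)}{n-\alpha q},\qquad \|g\|_{L^r}=\|\I_\alpha f\|_{L^{q_1}}^{\frac1{p-1}}\lesssim\|f\|_{L^q}^{\frac1{p-1}}.
\]
The hypotheses of the theorem are exactly what place $r$ in the admissible range $(1,n/\alpha)$ for a second application. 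Indeed, $r>1$ is equivalent to $p-1>1/q-\alpha/n$. Also $r<n/\alpha$ is equivalent to $\alpha q(p-1)<n-\alpha q$, that is, to $\alpha pq<n$.

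\emph{Second step.} Apply Theorem~\ref{thm:Huang_et_all}(a) to $g$ with $\delta=n$, exponent $r$, and the parameter $\kappa':=\kappa q/r$. This choice makes the output dimension $n-\kappa' r=n-\kappa q$. A direct computation gives
\[
n-\alpha r=\frac{n(n-\alpha pq)}{n-\alpha q},\qquad\text{hence}\qquad \frac{r}{n-\alpha r}=\frac{q(p-1)}{n-\alpha pq},
\]
so the output exponent is $\frac{r(n-\kappa q)}{n-\alpha r}=s$. This gives $\|\V^f_{\alpha,p}\|_{L^s(\Rn,\Ha^{n-\kappa q}_\infty)}\lesssim\|g\|_{L^r}\lesssim\|f\|_{L^q}^{1/(p-1)}$, which is the claim.

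\emph{Main obstacle.} The second application requires $\kappa'=\kappa q/r<\alpha$. This is automatic when $r\ge q$, i.e. $p\ge 2-\alpha q/n$, but it is not obviously implied by $\kappa<\alpha$ alone. If it fails, I would distribute the $\kappa$ between the two steps. In the first step I would use some $\kappa_1\in[0,\alpha)$, landing in $L^{q_1}(\Rn,\Ha^{n-\kappa_1 q}_\infty)$. I would then pass to $g$; powers are harmless for Choquet quasi-norms since $\{g>t\}=\{\I_\alpha f>t^{p-1}\}$. In the second step I would apply the theorem with $\delta=n-\kappa_1 q$ and some $\kappa_2$, chosen so that $\delta-\kappa_2 r_1=n-\kappa q$. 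I would check that the exponent identity still produces $s$ and that both $\kappa_i<\alpha$ can be arranged. Whether such a choice always exists under the stated hypotheses, or whether the hypothesis on $\kappa$ implicitly needs $\kappa q/r<\alpha$, is the point I would verify first.
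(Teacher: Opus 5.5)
\textbf{Verdict: genuine gap.} You locate the problem correctly but leave it open. Your second application of Theorem~\ref{thm:Huang_et_all}(a), with $\delta=n$ and $\kappa'=\kappa q/r$, needs $\kappa q<\alpha r$, i.e.\ $\kappa<\frac{\alpha n(p-1)}{n-\alpha q}$. The hypotheses do not imply this. Since $(2-\frac{\alpha q}{n})-(1+\frac1q-\frac{\alpha}{n})=\frac{(q-1)(n-\alpha q)}{qn}>0$, the admissible range of $p$ always contains values with $r<q$. Concretely, $n=2$, $\alpha=1$, $q=\frac32$, $p=\frac65$ satisfy all assumptions and give $r=\frac65$, hence $\kappa'=\frac{5\kappa}{4}\ge\alpha$ for every $\kappa\in[\frac45,1)$. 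As written, the proposal proves the theorem only under the extra restriction $\kappa q<\alpha r$. The question you defer, whether a suitable split of $\kappa$ always exists, is exactly the missing step.

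It is settled by the extreme split $\kappa_1=\kappa$, $\kappa_2=0$, which is precisely the paper's proof. First apply Theorem~\ref{thm:Huang_et_all}(a) with $\delta=n$ and the given $\kappa$; this is allowed because $\kappa<\alpha$ is a hypothesis. It gives $(\I_\alpha f)^{1/(p-1)}\in L^{r_1}(\Rn,\Ha^{\delta_1}_\infty)$ with $\delta_1=n-\kappa q$ and $r_1=\frac{q(p-1)(n-\kappa q)}{n-\alpha q}$. Then apply the theorem again with $\delta=\delta_1$ and $\kappa=0$. The admissibility condition $r_1\in(\delta_1/n,\delta_1/\alpha)$, after dividing by $n-\kappa q$, reads $\frac{q(p-1)}{n-\alpha q}\in(\frac1n,\frac1\alpha)$. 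This is exactly $p>1+\frac1q-\frac{\alpha}{n}$ together with $\alpha pq<n$, so $\kappa$ and $r$ are no longer coupled. Moreover $\delta_1-\alpha r_1=\frac{(n-\kappa q)(n-\alpha pq)}{n-\alpha q}$, so the output exponent $\frac{r_1\delta_1}{\delta_1-\alpha r_1}$ equals $s$. The second step also needs $(\I_\alpha f)^{1/(p-1)}$ to be locally integrable. That follows from your own Lebesgue bound $(\I_\alpha f)^{1/(p-1)}\in L^{r}(\Rn)$ with $r>1$; the paper obtains it instead from H\"older's inequality and the comparison of Hausdorff contents.
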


\begin{proof}
Let $f\in L^{q}(\Rn)$ be given.
By Theorem~\ref{thm:Huang_et_all}(a) and \cite[Remark 3.3]{HH-S_JGA}  we have
\[
\| I_\alpha f \|_{L^{\frac{q(n-\kappa q)}{n-\alpha q}}(\Rn, \Ha^{n-\kappa q}_\infty)}
\le c \| f \|_{L^{q}(\Rn, \Ha^n_\infty)}
 = c \| f \|_{L^{q}(\Rn)}
\]
Let us write that $\delta := n- \kappa q$ and $r:=  \frac{q (p-1)(n-\kappa q) }{n- \alpha q}$.
Rewriting
\[
\| I_\alpha f \|_{L^{\frac{p(n-\kappa q)}{n-\alpha} q}(\Rn, \Ha^{n-\kappa p}_\infty)}
= \| (I_\alpha f)^{\frac{1}{p-1}} \|_{L^r (\Rn, \Ha^{\delta}_\infty)}^{p-1}\,,
\]
we obtain
\begin{equation}\label{Rieszr_integ}
\| (I_\alpha f)^{\frac{1}{p-1}} \|_{L^{r}(\Rn, \Ha^{\delta}_\infty)}
\le c \| f \|_{L^{q}(\Rn)}^{\frac{1}{p-1}}.
\end{equation}

Since $pq < \frac{n}{\alpha}$ we have $r < \delta/\alpha$. 
On the other hand,   the inequality $p> 1+ 1/q -\alpha/n$ implies that $r > \delta/n$.
Thus  $r \in (\delta/n, \delta/\alpha)$. 
Integrating 
$\vert ( I_\alpha f)^{\frac{1}{p-1}}\vert $ over a bounded set,
using H\"older's inequality with the exponents $(rn/\delta, nr/(nr-\delta))$,  
and applying 
\cite[Proposition 2.3]{COS24} between  the $n$-dimensional Hausdorff content and $\delta$-dimensional Hausdorff content, and using the inequality
\eqref{Rieszr_integ}, we obtain 
$(I_\alpha f)^{\frac{1}{p-1}} \in L^1_\loc(\Rn)$.
 Now we may apply Theorem~\ref{thm:Huang_et_all}  (a) with $\kappa =0$ and obtain
\begin{equation}\label{rq}
\| \V^f_{\alpha, p} \|_{L^{\frac{r\delta}{\delta-\alpha r}}(\Rn, \Ha^{\delta}_\infty)}
\le c \| (I_\alpha f)^{\frac{1}{p-1}}  \|_{L^{r}(\Rn, \Ha^\delta_\infty)}
\le c \| f \|_{L^{q}(\Rn)}^{\frac{1}{p-1}}.
\end{equation}
 Since $\delta$ and $r$ in \eqref{rq} were chosen as  $\delta=n-\kappa q$ and $r=  \frac{q (p-1)(n-\kappa q) }{n- \alpha q}$, a short calculation gives 
\[
\frac{r \delta}{\delta-\alpha r }= \frac{ q(p-1)(n-\kappa q)}{n-\alpha pq}.
\]
Hence, the estimate 
\eqref{rq} is the desired quasi-norm inequality.
\end{proof}

\begin{remark}\label{rem:sharp_exponent}
(a) 
The classical result
\[
\V^f_{\alpha, p}\in L^{\frac{nq(p-1)}{n-\alpha q p}}(\Rn)
\]
is recovered from Theorem~\ref{thm:H-M>1} with a choice $\kappa =0$.
We refer to \cite[Theorem~3.1 (ii)]{Cianchi} for a proof in Lorentz spaces.
An exact earlier reference seems to be hard to find.

(b) If $\kappa \to \alpha$, then the constant in Theorem~\ref{thm:Huang_et_all} blows up. Thus Theorem ~\ref{thm:H-M>1}
does not  include this endpoint.

(c) The exponent $\frac{q(\delta-\kappa q)}{\delta-\alpha q}$ in Theorem~\ref{thm:H-M>1} is 
sharp. Namely,
an example  \cite[Example 4.4]{CHYZ} shows that the exponent $\frac{q(\delta-\kappa q)}{\delta-\alpha q}$ in 
Theorem~\ref{thm:Huang_et_all}(a) is best possible.
\end{remark}

There are several pointwise estimates for the Riesz potentials
that follow the idea that 
Hedberg used effectively in \cite{Hed72}. 
These estimates as well as norm and quasi-norm estimates derived from them have recently appeared in context of Choquet integrals
in several papers. We refer to \cite{MartinezSpector2021, HH-S_AAG, HH-S_JFA, FuSaSh, MPS2026}.
The following lemma is a modification, where the Lebesgue norm has been replaced by the weak Lebesgue quasi-norm.

\begin{lemma}[Proposition 3.1 of \cite{CHYZ}]\label{lem:Hedberg-2}
Let $\alpha \in (0, n)$, $\kappa \in [0, \alpha)$, and $\delta \in (0, n]$.
If $q \in (\delta/n, \delta/\alpha)$, then there exists a constant $c$  such that
\[
\I_\alpha f(x) \le c  \bigg(M_\kappa f(x)\bigg)^{\frac{\delta-\alpha q }{\delta-\kappa q}}
\| f\|_{\wL^q(\Rn, \Ha^\delta_\infty)}^{\frac{q(\alpha-\kappa)}{\delta -\kappa q}}  
\]
for all $x \in \Rn$  and all $f \in L^1_{\loc}(\Rn)$.
\end{lemma}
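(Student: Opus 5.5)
The plan is a Hedberg-type splitting of the Riesz potential at a radius $R>0$ chosen later, with the far part controlled through the weak quasi-norm and the Hausdorff content rather than through H\"older's inequality. Fix $x\in\Rn$ and assume $f\ge 0$, $0<\|f\|_{\wL^q(\Rn,\Ha^\delta_\infty)}<\infty$ and $\M_\kappa f(x)<\infty$; otherwise the inequality is trivial. Write
\[
\I_\alpha f(x)=\int_{B(x,R)}\frac{f(y)}{|x-y|^{n-\alpha}}\,dy+\int_{\Rn\setminus B(x,R)}\frac{f(y)}{|x-y|^{n-\alpha}}\,dy=:J_1+J_2 .
\]

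For the near part $J_1$ I will decompose into dyadic annuli $2^{-j-1}R\le |x-y|<2^{-j}R$. On each annulus the integral of $f$ divided by $(2^{-j}R)^{n-\alpha}$ is bounded by $c\,(2^{-j}R)^{\alpha-\kappa}\M_\kappa f(x)$. Since $\alpha>\kappa$, the geometric series converges and gives
\[
J_1\le c\,R^{\alpha-\kappa}\,\M_\kappa f(x).
\]

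For the far part $J_2$ I again split into annuli $2^{j}R\le|x-y|<2^{j+1}R$ with $j\ge0$. The key estimate I need is the Choquet-type bound
\[
\int_{B(z,\rho)}f\,dy\le c\,\rho^{\,n-\delta/q}\,\|f\|_{\wL^q(\Rn,\Ha^\delta_\infty)} .
\]
I would prove it with the layer cake formula,
\[
\int_B f=\int_0^\infty|\{f>t\}\cap B|\,dt,
\]
bounding the measure by $\min\bigl(|B|,\;c\,\Ha^\delta_\infty(\{f>t\})^{n/\delta}\bigr)$. Here Lebesgue measure satisfies $|E|\lesssim \Ha^n_\infty(E)\le \Ha^\delta_\infty(E)^{n/\delta}$, which comes from covering by balls and $\sum r_i^n\le(\sum r_i^\delta)^{n/\delta}$. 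With the weak bound $\Ha^\delta_\infty(\{f>t\})\le t^{-q}\|f\|^q$, I split the $t$-integral at $t_0=\|f\|\rho^{-\delta/q}$. This gives $\rho^n t_0+\|f\|^{n/\delta}\int_{t_0}^\infty t^{-qn/\delta}\,dt$, which converges because $q>\delta/n$, and both pieces equal $c\,\rho^{n-\delta/q}\|f\|$. Summing over the annuli then gives
\[
J_2\le c\sum_j (2^jR)^{\alpha-\delta/q}\,\|f\|\le c\,R^{\alpha-\delta/q}\,\|f\|,
\]
and the series converges since $q<\delta/\alpha$.

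Finally I balance the two terms by choosing $R^{\delta/q-\kappa}=\|f\|/\M_\kappa f(x)$, which is admissible since $\delta-\kappa q>0$ (as $\kappa<\alpha<\delta/q$). Substituting,
\[
R^{\alpha-\kappa}\M_\kappa f(x)=\M_\kappa f(x)^{1-\frac{q(\alpha-\kappa)}{\delta-\kappa q}}\,\|f\|^{\frac{q(\alpha-\kappa)}{\delta-\kappa q}},
\]
and $1-\frac{q(\alpha-\kappa)}{\delta-\kappa q}=\frac{\delta-\alpha q}{\delta-\kappa q}$, which is exactly the claimed form; $J_2$ gives the same expression. The main obstacle is the local $L^1$ bound in terms of the weak Choquet quasi-norm, specifically the comparison of Lebesgue measure with $\Ha^\delta_\infty$. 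I will take care that this comparison uses the ball-radius normalization in \eqref{HausdorffC}, and that the constant stays independent of $f$ and $x$.
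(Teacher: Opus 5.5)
Your argument is correct, but it does not follow the paper, which gives no argument of its own. The paper simply specializes \cite[Proposition 3.1(i)]{CHYZ}, a Hedberg-type bound stated for Choquet--Lorentz quasi-norms, to second index $\infty$. You instead prove the weak-type estimate from scratch. Your only non-classical input is the local bound $\int_{B(z,\rho)}|f|\,dy\le c\,\rho^{n-\delta/q}\|f\|_{\wL^q(\Rn,\Ha^\delta_\infty)}$, which follows from the layer-cake formula and the elementary comparison $|E|\le\omega_n\Ha^n_\infty(E)\le\omega_n\Ha^\delta_\infty(E)^{n/\delta}$.

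The citation is shorter and keeps the lemma inside the Lorentz framework of \cite{CHYZ} already used for Theorem~\ref{thm:Huang_et_all}. Your route is self-contained, gives constants depending only on $n,\alpha,\kappa,\delta,q$, and shows exactly where each hypothesis enters: $\kappa<\alpha$ for the near sum, $q>\delta/n$ for the $t$-integral, $q<\delta/\alpha$ for the far sum, and $\delta-\kappa q>0$ for choosing $R$. Moreover, with the usual definitions the weak quasi-norm is dominated by every Choquet--Lorentz quasi-norm with the same first index, so the case you prove is the strongest one.

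Two small repairs are needed. First, in the tail term the power of the quasi-norm is $qn/\delta$, not $n/\delta$, so with $t_0=\|f\|\rho^{-\delta/q}$ the computation reads
\[
\|f\|^{qn/\delta}\int_{t_0}^\infty t^{-qn/\delta}\,dt=c\,\|f\|^{qn/\delta}\,t_0^{1-qn/\delta}=c\,\rho^{n-\delta/q}\,\|f\|.
\]
Second, your choice of $R$ needs $\M_\kappa f(x)>0$. This case is harmless: if $\M_\kappa f(x)=0$ then $f=0$ a.e., so $\I_\alpha f(x)=0$ and there is nothing to prove.
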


\begin{proof}
If we choose  the second index  of the corresponding Lorentz space in \cite[Proposition 3.1 (i)]{CHYZ} to be infinity, the desired  inequality follows.
\end{proof}

\begin{theorem}\label{thm:H-M=1}
 Let  $n\geq 2$,  $\alpha \in (0, n)$, $\kappa \in [0,1)$,  and  $p \in (2-\frac{\alpha}{n}, \frac{n}{\alpha})$ be given and suppose that
$f\in L^{1}(\Rn)$.
 Then
 \[ 
  \|\V^f_{\alpha, p} \|_{\wL^s(\Rn,\Ha^{n- \kappa}_\infty)}
  \le c \|f\|_{L^1(\Rn)}^{\frac{1}{p-1}},
  \]
here $s:= \frac{ (p-1)(n-\kappa)}{(n-\alpha p)}$ and $c$ is a constant independent of $f$.
\end{theorem}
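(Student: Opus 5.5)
The plan is to mirror the proof of Theorem~\ref{thm:H-M>1}, but with weak spaces throughout. I would first put $g:=(\I_\alpha f)^{\frac1{p-1}}$ into a weak Lebesgue space with respect to $\Ha^n_\infty$. Then I would pass from $g$ to $\I_\alpha g=\V^f_{\alpha,p}$ by combining the Hedberg-type Lemma~\ref{lem:Hedberg-2} with a weak-type estimate for a fractional maximal function in the spirit of Theorem~\ref{thm:weak_estimate}.

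\emph{Step 1.} Apply Theorem~\ref{thm:Huang_et_all}(b) with $\delta=n$, $q=1=\delta/n$ and $\kappa=0$, using $\|f\|_{L^1(\Rn,\Ha^n_\infty)}\approx\|f\|_{L^1(\Rn)}$ as in the proof of Theorem~\ref{thm:H-M>1}. This gives $\I_\alpha f\in \wL^{\frac{n}{n-\alpha}}(\Rn,\Ha^n_\infty)$. Raising to the power $\frac1{p-1}$ yields
\[
\|g\|_{\wL^t(\Rn,\Ha^n_\infty)}\lesssim \|f\|_{L^1(\Rn)}^{\frac1{p-1}},\qquad t:=\frac{n(p-1)}{n-\alpha}.
\]
The hypothesis $p\in(2-\frac{\alpha}{n},\frac n\alpha)$ is exactly $t\in(1,\frac n\alpha)$. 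By Lemma~\ref{lem:weak_subset}, $g\in L^1_\loc(\Rn)$, so $\I_\alpha g$ makes sense.

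\emph{Step 2.} Apply Lemma~\ref{lem:Hedberg-2} to $g$ with $\delta=n$, exponent $t$, and an auxiliary index $\kappa'\in[0,\alpha)$ to be chosen:
\[
\I_\alpha g(x)\lesssim \big(\M_{\kappa'}g(x)\big)^{\frac{n-\alpha t}{n-\kappa' t}}\,\|g\|_{\wL^t(\Rn,\Ha^n_\infty)}^{\frac{t(\alpha-\kappa')}{n-\kappa' t}}.
\]
Hence the level set $\{\I_\alpha g>\lambda\}$ is contained in a level set of $\M_{\kappa'}g$ at height comparable to $\lambda^{\frac{n-\kappa't}{n-\alpha t}}$ times a power of $\|g\|$.

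The identity $\frac{t}{n-\alpha t}=\frac{p-1}{n-\alpha p}$ shows what is needed. If $\M_{\kappa'}g\in\wL^t(\Rn,\Ha^{n-\kappa}_\infty)$ with norm $\lesssim\|g\|_{\wL^t(\Ha^n_\infty)}$, and if $n-\kappa' t=n-\kappa$, then we get exactly
\[
\Ha^{n-\kappa}_\infty(\{\V^f_{\alpha,p}>\lambda\})\lesssim \lambda^{-s}\|f\|_{L^1}^{\frac{s}{p-1}},\qquad s=\frac{(p-1)(n-\kappa)}{n-\alpha p}.
\]
The homogeneity bookkeeping for the powers of $\|f\|_{L^1}$ is routine. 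So I choose $\kappa'=\kappa/t$. This satisfies $\kappa'<\alpha$ provided $\kappa<\alpha t$, which I would verify or impose.

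\emph{Step 3 (main obstacle).} Prove the weak-type bound $\Ha^{n-\kappa}_\infty(\{\M_{\kappa/t}g>\lambda\})\lesssim\lambda^{-t}\|g\|^t_{\wL^t(\Ha^n_\infty)}$. Theorem~\ref{thm:weak_estimate} handles $\M_\kappa$ of an $L^1$ function. The natural route is H\"older's inequality,
\[
\M_{\kappa/t}g\le\big(\M_\kappa(g^t)\big)^{1/t},
\]
followed by Theorem~\ref{thm:weak_estimate} applied to $g^t$ truncated at height $\lambda^t/4$. However, $g^t$ is only in weak $L^1$, and $\int_{\{g^t>\mu\}}g^t\,dx$ diverges logarithmically, so this endpoint is where the argument is delicate.

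What I would try first is a level-dependent splitting. For fixed $\lambda$, write $g=g\chi_{\{g>\epsilon\lambda\}}+g\chi_{\{g\le\epsilon\lambda\}}$. The bounded part contributes nothing after choosing $\epsilon$ small, exactly as in the proof of Theorem~\ref{thm:weak_estimate}. For the large part, use H\"older with an exponent $t_0<t$, namely $\M_{\kappa/t}h\le(\M_{\kappa t_0/t}h^{t_0})^{1/t_0}$, and then apply the covering argument of Theorem~\ref{thm:weak_estimate} to $h^{t_0}=g^{t_0}\chi_{\{g>\epsilon\lambda\}}$, which is now genuinely in $L^1$ with
\[
\int_{\{g>\epsilon\lambda\}} g^{t_0}\,dx\lesssim\lambda^{t_0-t}\|g\|^t_{\wL^t}.
\]
To recover the dimension $n-\kappa$ rather than $n-\kappa t_0/t$, I would redo the covering estimate directly with balls satisfying $r^{\kappa/t}\,\frac{1}{|B|}\int_B h>\lambda$. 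Combining this with H\"older on each ball and with $\Ha^{n-\kappa}_\infty(B)\approx r^{n-\kappa}$ should give the sum $\sum r_i^{n-\kappa}\lesssim\lambda^{-t}\|g\|^t$. If this fails at the endpoint, the fallback is to use Theorem~\ref{thm:weak-to-weak} for the $\kappa=0$ part and to interpolate.
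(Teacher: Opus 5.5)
\textbf{Gap in Step 3.} Your Steps 1 and 2 and the exponent bookkeeping are fine, but the whole proof rests on the Step 3 inequality $\Ha^{n-\kappa}_\infty(\{\M_{\kappa/t}g>\lambda\})\lesssim\lambda^{-t}\|g\|^t_{\wL^t(\Rn,\Ha^n_\infty)}$. For $\kappa>0$ this inequality is false for general $g\in\wL^t(\Rn,\Ha^n_\infty)$. Your argument uses nothing about $g$ beyond that membership, so no refinement of the sketch can work.

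Here is a counterexample. Write $\beta=\kappa/t$ and fix an integer $K\ge 1$. For $k=1,\dots,K$:
\begin{itemize}
\item let $\rho_k=2^{-k/\beta}$;
\item let $E_k$ be a union of $N_k\approx 2^{k(n/\beta-t)}$ disjoint balls of radius $\rho_k$, with all balls (over all $k$) placed very far apart;
\item put $g=\sum_k 2^k\chi_{E_k}$.
\end{itemize}
Then:
\begin{itemize}
\item $|E_k|\approx N_k\rho_k^n=2^{-kt}$, so $|\{g>\mu\}|\lesssim\mu^{-t}$ and $\|g\|_{\wL^t(\Rn,\Ha^n_\infty)}\lesssim 1$ uniformly in $K$;
\item on $E_k$ one has $\M_\beta g\gtrsim 2^k\rho_k^\beta=1$;
\item the measure spreading mass $\rho_k^{n-\kappa}$ uniformly over each ball of $E_k$ satisfies $\mu(B(z,R))\lesssim R^{n-\kappa}$ once the balls are far enough apart. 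Hence $\Ha^{n-\kappa}_\infty(\bigcup_k E_k)\gtrsim\sum_k N_k\rho_k^{n-\kappa}\approx K$, since $(n-\kappa)/\beta=n/\beta-t$.
\end{itemize}
So the logarithmic divergence you noticed is real: it costs one unit of $\Ha^{n-\kappa}_\infty$-content per dyadic scale. Neither the level-dependent splitting nor interpolation can remove it.

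The truncation you borrow from Theorem~\ref{thm:weak_estimate} also fails once the factor $r^{\kappa/t}$ is present. A function bounded by $\epsilon\lambda$ can have arbitrarily large $\M_{\kappa/t}$; for example, $\M_{\kappa/t}(\epsilon\lambda\chi_{B(0,R)})(0)\ge\epsilon\lambda R^{\kappa/t}$.

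\textbf{How the paper avoids this.} The missing idea is to spend $\kappa$ in the first potential rather than the second, so that no fractional maximal operator appears.
\begin{itemize}
\item The paper applies Theorem~\ref{thm:Huang_et_all}(b) with $\delta=n$, $q=1$ and the given $\kappa$. This gives $\I_\alpha f\in\wL^{\frac{n-\kappa}{n-\alpha}}(\Rn,\Ha^{n-\kappa}_\infty)$.
\item Hence $g\in\wL^r(\Rn,\Ha^\delta_\infty)$ with $\delta=n-\kappa$ and $r=\frac{(p-1)(n-\kappa)}{n-\alpha}\in(\delta/n,\delta/\alpha)$.
\item Lemma~\ref{lem:Hedberg-2} is then used only with $\kappa=0$, in dimension $\delta$. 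It gives $\I_\alpha g\lesssim(\M g)^{1-\alpha r/\delta}\|g\|_{\wL^r(\Rn,\Ha^\delta_\infty)}^{\alpha r/\delta}$.
\item Theorem~\ref{thm:weak-to-weak} bounds the ordinary $\M$ on $\wL^r(\Rn,\Ha^\delta_\infty)$, with no change of dimension.
\end{itemize}
Since $\frac{r\delta}{\delta-\alpha r}=s$, this yields the theorem. If you move $\kappa$ into Step 1 in this way, your Steps 2 and 3 reduce exactly to these two tools.
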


\begin{proof}
Let $f\in L^{1}(\Rn)$ be fixed.
Theorem ~\ref{thm:Huang_et_all} (b) with $\delta =n$  implies  that
\[
\| I_\alpha f \|_{\wL^{\frac{n-\kappa}{n-\alpha}}(\Rn, \Ha^{n-\kappa}_\infty)}
\le c \| f \|_{L^{1}(\Rn, \Ha^n_\infty)}.
\]
Denoting
$\delta := n- \kappa$ and $r:=  \frac{(p-1)(n-\kappa) }{n- \alpha}$
 and recalling \cite[Remark 3.3]{HH-S_JGA}, we obtain
\begin{equation}\label{WeakNormEst}
\| (I_\alpha f)^{\frac{1}{p-1}} \|_{\wL^{r}(\Rn, \Ha^{\delta}_\infty)}
\le c \| f \|_{L^{1}(\Rn, \Ha^n_\infty)}^{\frac{1}{p-1}}
= c \| f \|_{L^{1}(\Rn)}^{\frac{1}{p-1}}.
\end{equation}
Since $p \in (2-\frac{\alpha}{n}, \frac{n}{\alpha})$, we have $r \in (\delta/n, \delta/\alpha)$.
Lemma~\ref{lem:weak_subset} yields  that  $(I_\alpha f)^{\frac{1}{p-1}} \in L^1_\loc(\Rn)$
 by the estimate \eqref{WeakNormEst}. 
Now Lemma~\ref{lem:Hedberg-2}  may be applied with $\kappa =0$ and $q=r$. Hence, the estimate
\eqref{WeakNormEst} shows that
\[
\begin{split}
\V^f_{\alpha, p}(x)^{\frac{ \delta}{\delta-\alpha r }} &\le 
c  \bigg(M \big(\I_\alpha f(x)^{\frac1{p-1}} \big)\bigg)\, 
\| (\I_\alpha f)^{\frac1{p-1}}\|_{\wL^r(\Rn, \Ha^\delta_\infty)}^{ \frac{r \alpha}{\delta-\alpha r }} \\
&\le 
c  \bigg(M \big(\I_\alpha f(x)^{\frac1{p-1}} \big)\bigg)\,
\|  f\|_{L^1(\Rn)}^{ \frac{r \alpha}{(p-1)(\delta-\alpha r) }}. 
\end{split}
\]
In order to obtain a bound to the weak quasi-norm of the potential $\V^f_{\alpha, p}$ we apply 
Theorem~\ref{thm:weak-to-weak}  and the estimate \eqref{WeakNormEst}.  Thus,
\[
\begin{split}
\Big\| \Big(\V^f_{\alpha, p}\Big)^{\frac{\delta}{\delta-\alpha r }}\Big\|_{\wL^r(\Rn, \Ha^\delta_\infty)}
&\le  c \|  f\|_{L^1(\Rn)}^{ \frac{r \alpha}{(p-1)(\delta-\alpha r) }} \| M \big(\I_\alpha f(x)^{\frac1{p-1}} \big)\|_{\wL^r(\Rn, \Ha^\delta_\infty)} \\
&\le   c \|  f\|_{L^1(\Rn)}^{ \frac{r \alpha}{(p-1)(\delta-\alpha r) }} \| \big(\I_\alpha f(x)^{\frac1{p-1}} \big)\|_{\wL^r(\Rn, \Ha^\delta_\infty)} \\
&\le  c \|  f\|_{L^1(\Rn)}^{ \frac{\alpha(n- \kappa)}{(\delta-\alpha r)(n-\alpha) }} \|  f\|_{L^1(\Rn)}^{\frac{1}{p-1 }}\,.
\end{split}
\]
This means that we have
\begin{equation}\label{weakfinal}
\| \V^f_{\alpha, p}\|_{\wL^{\frac{r \delta}{\delta - \alpha r}}(\Rn, \Ha^\delta_\infty)}^{\frac{\delta}{\delta-\alpha r }}
\le 
c 
\|  f\|_{L^1(\Rn)}^{ \frac{ \alpha(n- \kappa)}{(\delta-\alpha r)(n-\alpha) }+\frac{1}{p-1 }}
= c 
\|  f\|_{L^1(\Rn)}^{ \frac{ n- \alpha}{(n- \alpha p) (p-1)}}.
\end{equation}
Since $\delta$ and $r$ were chosen as $\delta =n-\kappa$ and $r=\frac{(p-1)(n-\kappa) }{n- \alpha}$, short calculations give
\[
\frac{r \delta}{\delta-\alpha r }= \frac{ (p-1)(n-\kappa)}{n-\alpha p}
\]
and
\[
\frac{\delta-\alpha r}{\delta}  \frac{ n- \alpha}{(n- \alpha p) (p-1)}  = \frac{1}{p-1}. 
\]
Hence,
the estimate 
\eqref{weakfinal} is the required inequality.
\end{proof}

Since the
Wolff potential can be estimated  pointwise by the Havin--Maz'ya potential, 
both the quasi-norm and the weak quasi-norm estimates follow to the Wolff potential also.

\begin{proof}[Proof of Theorem~\ref{thm:Wolf}]
Since $f$ is a  locally integrable  function,  the mapping
$U \mapsto \int_U |f(y)| \, dy$,  where sets $U$ are Lebesgue measurable,
gives  a non-negative Radon measure.
Thus, the pointwise estimate
 \begin{equation}\notag
\W^{f}_{\alpha ,p}(x) \le c \V^f_{\alpha, p}(x) 
 \end{equation}
 for every $x \in \Rn$, where the constant 
 $c$ depends only on $n$, $p$, and $\alpha$, 
   follows by \cite[Lemma~7.1, p.~136]{HM}, \cite[Theorem 3.1, p.~178]{AM}.
Now Theorem~\ref{thm:H-M=1} and Theorem  \ref{thm:H-M>1} give the desired quasi-norm estimates,
respectively.
\end{proof}

\begin{remark}\label{rem:sharp-2}
 If  $2-\frac{\alpha}{n}<p<\frac{n}{\alpha}$, then 
\[
\W^{f}_{\alpha ,p}(x) \lesssim  \V^f_{\alpha, p}(x) \lesssim \W^{f}_{\alpha ,p}(x)\,.
\]
 For the validity  of these inequalities, we refer to \cite[Lemma 7.1 p. 136]{HM}, \cite[Theorem 3.1]{AM}, \cite[Theorem 6.1. p. 78]{HM}, and \cite[Theorem 3.3]{AM}. 
 Thus,  the exponent $\frac{ q(p-1)(n-\kappa q)}{n-\alpha pq}$ in Theorem~\ref{thm:Wolf}(a) is sharp
by Remark~\ref{rem:sharp_exponent}(c). 
\end{remark}

\section{An application to the PDEs}\label{sec:pde}

Let $\Omega$ in $\Rn$,  $n\geq 2$, be an open set and let $p \in (1, n]$. 
Let us consider functions $u:\Omega\to (-\infty, \infty]$.
If a function $u$ belongs to  the Sobolev space $W^{1,p}_{\loc}(\Omega)$, that is 
$u$ and its  gradient are in $L^p(\Omega)$ locally,
then $u$
is a \emph{weak solution} of the $p$-Laplace equation,
\begin{equation*}
-\operatorname{div}\big( |\nabla u|^{p-2} \nabla u \big)=0\,,
\end{equation*}
if
\[
\int_\Omega |\nabla u|^{p-2} \nabla u \cdot \nabla \phi \, dx =0
\]
for all $\phi \in C^\infty_0(\Omega)$. Every weak solution has a continuous representative. A continuous weak solution is called \emph{$p$-harmonic} in $\Omega$. 
A lower semicontinuous function $u: \Omega \to (-\infty, \infty]$ is called  \emph{$p$-superharmonic} if $u$ is
not identically infinite in each component of $\Omega$, and if for all open  subsets $D$ compactly inside $\Omega$ and all 
$h \in C(\overline D)$ 
 which are  $p$-harmonic in $D$,
the following statement holds:
if  $h\le u$ on  the boundary $\partial D$, then $h\le u$  also  in $D$. We refer to \cite{HKM}.
 Note that the $p$-superharmonic function   $u$ does not necessary belong to $W^{1, p}_\loc(\Omega)$ 
but $u \in L^s(\Omega)$ for $0<s<p-1$, we refer to \cite[Theorems 7.25 and 7.45]{HKM}.

A $p$-superharmonic function satisfies 
the equation
\begin{equation*}
-\operatorname{div}\big( |\nabla u|^{p-2}  \nabla u \big)=f\,,
\end{equation*}
if
\[
\int_\Omega  \lim_{k \to \infty}|\nabla \min\{u, k\}|^{p-2} \nabla \min\{u, k\} \cdot \nabla \phi \, dx = \int_\Omega f \phi \, dx 
\]
for all $\phi \in C^\infty_0(\Omega)$,  we refer to 
\cite[p.~147]{KM}.

The Wolff potential $\W^f_{1, p}$ can be used to  estimate certain
$p$-superharmonic functions pointwise by \cite{KM}.
Theorem~\ref{thm:Wolf} and the result of \cite[Theorem 1.6]{KM} together yield
Theorem \ref{thm:application}.
 We emphasise that Theorem \ref{thm:application} 
 is new whenever $\kappa >0$.  
 The known result is recovered, if $\kappa =0$.

\begin{theorem}\label{thm:application}
Let  $\Omega$ be an open set in $\Rn$, $n\geq 2$, and suppose that
$0\le\kappa <1$,
$1\le q \le n/p$, and $\max\{1, 1 + 1/q -1/n\}<p<n$.
 If a function $f \in L^q (\Omega )$ is given and
$u$ is a non-negative $p$-superharmonic function  defined on  $\Omega$  that satisfies the equation
\begin{equation*}
-\operatorname{div}\big( |Du|^{p-2} Du \big)=f\,,
\end{equation*}
then  we have:
\begin{itemize}
\item[(a)] If $q>1$, then
\[
u \in L^s_\loc(\Omega,\Ha^{n- \kappa q}_\infty).
\]

\item[(b)] If $q=1$, then
\[
u \in \wL^s_\loc(\Omega,\Ha^{n- \kappa}_\infty).
\]
\end{itemize}
Here
\begin{equation*}
s := \frac{ q(p-1)(n-\kappa q)}{n- pq}.
\end{equation*}
\end{theorem}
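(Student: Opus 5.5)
The plan is to combine the pointwise Wolff potential bound of Kilpeläinen and Malý \cite[Theorem 1.6]{KM} with Theorem~\ref{thm:Wolf} applied with $\alpha=1$.

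Fix an open set $O$ compactly inside $\Omega$. Choose $R>0$ so small that $B(x,2R)\subset\Omega$ for every $x\in\overline O$. The result of \cite[Theorem 1.6]{KM} gives, for every $x\in O$,
\[
u(x)\le c\Big(\inf_{B(x,R)}u+\W^{f\chi_{B(x,2R)}}_{1,p}(x,2R)\Big)\le c\Big(\inf_{B(x,R)}u+\W^{\tilde f}_{1,p}(x)\Big),
\]
where $\tilde f:=|f|\chi_{\Omega}$ is extended by zero to $\Rn$. The truncated Wolff potential $\W(x,2R)$ only runs over radii $r<2R$, so it is dominated by the full potential $\W^{\tilde f}_{1,p}$ of $\tilde f\in L^q(\Rn)$.

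The first term is harmless. Cover $\overline O$ by finitely many balls $B(x_j,R/2)$. For $x\in B(x_j,R/2)$ we have $B(x_j,R/2)\subset B(x,R)$, hence
\[
\inf_{B(x,R)}u\le \inf_{B(x_j,R/2)}u<\infty,
\]
since a $p$-superharmonic function is finite almost everywhere, being in $L^s_\loc$ for small $s$. So $u\le C_O+c\,\W^{\tilde f}_{1,p}$ on $O$. Constants belong to $L^s(O,\Ha^\delta_\infty)$ and to $\wL^s(O,\Ha^\delta_\infty)$, because $\Ha^\delta_\infty(O)<\infty$ for bounded $O$. The quasi-triangle inequality for these quasi-norms then reduces both claims to the corresponding statements for $\W^{\tilde f}_{1,p}$.

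For part (a), apply Theorem~\ref{thm:Wolf}(a) with $\alpha=1$. The hypotheses become $p>1+1/q-1/n$ and $pq<n$, and $\kappa<1=\alpha$ as required. This yields $\W^{\tilde f}_{1,p}\in L^s(\Rn,\Ha^{n-\kappa q}_\infty)$ with
\[
s=\frac{q(p-1)(n-\kappa q)}{n-pq}.
\]
Restricting to $O$ only decreases the Choquet integral, by monotonicity of the content.

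For part (b), apply Theorem~\ref{thm:Wolf}(b) with $\alpha=1$. This needs $p\in(2-1/n,n)$, which is exactly $\max\{1,1+1-1/n\}<p<n$. It gives the weak estimate with $s=(p-1)(n-\kappa)/(n-p)$.

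The main obstacle is the endpoint $pq=n$, which the hypothesis $q\le n/p$ allows but Theorem~\ref{thm:Wolf}(a) excludes. There the formula for $s$ degenerates, since its denominator vanishes. I would treat this case separately, or interpret it as arbitrary large finite $s$. The approach is to use that $O$ is bounded and to lower $q$ slightly to some $q'<q$: then $\tilde f\chi_{B}\in L^{q'}$ on a large ball $B$, and the exponents tend to infinity. If that does not suffice, I would note that the statement is meaningful only for $pq<n$ and proceed under that assumption.

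A secondary point is justifying the application of \cite{KM}, which is stated for nonnegative measures $\mu=f\,dx$. I would use that $u$ solves the equation with a nonnegative $f$; otherwise I would replace $f$ by $|f|$ in the potential, which only enlarges it.
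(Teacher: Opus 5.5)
Your proposal is correct and follows the same route as the paper. You use the pointwise bound of \cite[Theorem 1.6]{KM} and then Theorem~\ref{thm:Wolf} with $\alpha=1$; your finite covering for the infimum term, the zero extension of $f$, and the quasi-triangle inequality just make explicit what the paper summarises as ``controlled locally''.

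Two small corrections. First, \cite[Theorem 1.6]{KM} requires $u$ to be $p$-superharmonic in $B(x,3R)$, so choose $R$ with $B(x,3R)\subset\Omega$ for $x\in\overline O$. Second, your concern about $pq=n$ is justified, since the paper's proof likewise only covers $pq<n$. Your device of passing to $q'<q$ works, provided you also replace $\kappa$ by $\kappa q/q'<1$ to keep the content dimension equal to $n-\kappa q$.
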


\begin{proof}
Let $x_0 \in \Omega$ and let $r_0 \in (0, \infty)$ be such that  $B(x_0, 3 r_0) \subset \Omega$.
From \cite[Theorem 1.6]{KM}, we have the pointwise estimate
\[
u(x_0) 
\le c \inf_{x \in B(x_0, r_0)} u(x) 
+ c \W^f_{1, p}(x_0).
\]
Since $u$ is non-negative, the first term on the right-hand side can be controlled locally. 
Hence we need  only to apply Theorem~\ref{thm:Wolf} to the Wolff potential and obtain the part (a) and part (b).
\end{proof}

We conclude this section by remarks.

\begin{remark}
Let $\kappa\in [0,1)$, and let us
suppose that $f\in L^q(\Omega)$ and
$q\in [1, n/p)$. Then 
Theorem~\ref{thm:application} implies that every non-negative $p$-superharmonic  $u$ that satisfies
the equation
\[
-\operatorname{div}(|Du|^{p-2}Du)=f,
\]
is finite outside a set of zero $\Ha^{n-\kappa q}_\infty$-content.
\end{remark}

\begin{remark}
If $\Omega$ is chosen to be the whole $\Rn$
in Theorem~\ref{thm:application}
and $\inf_{x \in \Rn} u(x) =0$, then 
there exist positive, finite  constants $c_i$, $i=1,2$, such that
\[
c_1 \W^f_{1, p}(x_0) \le u(x_0) 
\le c_2 \W^f_{1, p}(x_0),  \mbox{ for all }  x_0\in\Rn,
\]
by \cite[Corollary 4.13]{KM}. If $q>1$, then the exponent 
$\frac{ q(p-1)(n-\kappa q)}{n- pq}$
 in Theorem~\ref{thm:application}(a)  is sharp by Remark~\ref{rem:sharp-2}.
\end{remark}


%
%
%
%
%

\bibliographystyle{amsalpha}

\begin{thebibliography}{HH}




\bibitem{Adams1998}
 Adams, D. R.:
{Choquet Integrals in Potential Theory},
\textit{Publ. Mat.}~ \textbf{42} (1998), {3--66}. 


\bibitem{Adams1988b}
Adams, D. R.: A note on Choquet integrals with respect to Hausdorff capacity. In:
Cwikel, M., Peetre, J., Sagher, Y., Wallin, H. (eds.) Function Spaces and Applica-
tions (Lund 1986), Lecture Notes in Mathematics vol. 1302, Springer, Berlin (1988)
pp.~115--124.


\bibitem{Adams2015}
Adams, D. R.: {\it Morrey Spaces}, {Birkhäuser, Cham--Heidelberg--New York}, 2015.

\bibitem{AH}
Adams,  D.~R., Hedberg,  L.~I.: {\it Function spaces and potential theory}, Grundlehren der mathematischen Wissenschaften, 314, Springer, Berlin, 1996.
 
\bibitem{AM}
 Adams,  D.~R., Meyers, N.~G.:  Thinness and Wiener Criteria for Non-linear Potentials,  \textit{Indiana Univ. Math. J.}~\textbf{22} (1972),  no. 2, 
 169--197.
 


\bibitem{BCRS2025}
Basak, R., Chen, Y.-W. B., Roychowdhury, P., Spector, D.:
The capacitary John-Nirenberg inequality revisited,
\textit{Adv. Calc. Var.} {\bf 18} (2025), no.4, 1361--1385.




\bibitem{CHYZ}
 Cao, Y.,  Huang, L.,   Yang, D., Zhuo, C.:
Sharp Poincar\'e–Sobolev Inequalities of Choquet–Lorentz Integrals
with Respect to Hausdorff Contents on Bounded John Domains,
\textit{J.\ Geom.\ Anal.}~\textbf{36} (2026), article number 218, doi:10.1007/s12220-026-02474-1. 

\bibitem{CerMS11}
Cerd{\'a}, J., Mart{\'i}n, J., Silvestre, P.: Capacitary function spaces.
\textit{Collect.\ Math.}~\textbf{62}  (2011),  no.~1, 95--118.


\bibitem{Chen2025}
Chen, Y.-W.~B.: A self-improving property of Riesz potentials in BMO, \textit{J.\ Geom.\ Anal.}~{\bf 35} (2025), no.~8, Paper No.~237, 23 pp.
%
%
%

\bibitem{ChenClaros2025}
Chen, Y.-W.~B.,  Claros, A.:
$\beta$-dimensional sharp maximal function and its applications. 
arXiv:2407.04456v3.
%

\bibitem{ChenClaros2026}
Chen, Y.-W.~B.,  Claros, A.:
Commutators of fractional integrals with $BMO^{\beta}$ functions. 
arXiv:2602.09742.
%

\bibitem{ChenSpector2023}
Chen, Y.-W., Spector, D.: 
On functions of bounded $\beta$-dimensional mean oscillation,
\textit{Adv. Calc. Var.}~\textbf{17} (2024), 975--996.

%
%






\bibitem{COS24}
 Chen, Y.-W.\ B.,  Ooi, K.\ H., Spector, D.: Capacitary maximal inequalities and applications, \textit{J.\ Funct.\ Anal.}~\textbf{286} (2024), no.\ 12, article number 110396, doi:10.1016/j.jfa.2024.110396.

\bibitem{Cho53}
 Choquet, G.: Theory of capacities. \emph{Ann. Inst. Fourier (Grenoble)}~\textbf{5} (1953--1954),  13--295. 
 
\bibitem{Cianchi}
 Cianchi,  A.: Nonlinear potentials, local solutions to elliptic equations, and rearrangements, \textit{Ann.\ Sc.\ Norm.\ Super.\
Pisa}~\textbf{10} (2011), 335--361.

%
\bibitem{Den94}
Denneberg, D.: {\it Non-additive measure and integral}, Theory and Decision Library 
Series B: Mathematical and Statistical Methods vol. 27,  Kluwer Academic Publishers Group, Dordrecht, 1994.
%


\bibitem{FuSaSh}
Futamura, T., Sawano, Y., Shimomura T.:
Weak-type estimates for variable Riesz potentials with respect to Hausdorff content over metric measure spaces,
\textit{Georgian Math. J.}\textbf{33} (2026), no. 2, 289--299.



\bibitem{HH-S_AAG}
Harjulehto, P., Hurri-Syrj\"anen, R.: 
Estimates for the variable order Riesz potential with application. 
In: 
Lenhart, S.,  Xiao, J. (eds.),
\textit{Potentials and Partial Differential Equations: The Legacy of David R. Adams},   
Advances in Analysis and Geometry vol. 8, De Gruyter, Berlin  (2023) pp. 127--155.






\bibitem{HH-S_JFA}
Harjulehto, P., Hurri-Syrj\"anen, R.: {On Choquet integrals and Poincar\'e-Sobolev  type inequalities},  
\textit{J.\ Funct.\ Anal.}~\textbf{284} (2023), issue 9, article number 109862, doi:10.1007/s44007-024-00131-z.




\bibitem{HH-S_La}
Harjulehto, P., Hurri-Syrj\"anen, R.: 
On Choquet integrals and Sobolev type inequalities, 
\textit{La Matematica} \textbf{3} (2024), 1379--1399.
%




\bibitem{HH-S_JGA}
Harjulehto, P., Hurri-Syrj\"anen, R.: On Lebesgue points and measurability with Choquet integrals, \textit{J.\ Geom.\ Anal.}~\textbf{36} (2026), article number 194, doi:10.1007/s12220-026-02419-8.









%





%
\bibitem{HKST2024}
Hatano, N., Kawasumi, R., Saito H., Tanaka, H.:
{Choquet integrals, Hausdorff content and fractional operators,}
\textit{Bull. Austr. Math. Soc.}
\textbf{110} (2024),
Issue 2,
355--366.
\textit{
Correction} to 'Choquet integrals, Hausdorff content and fractional operators' in
\emph{Bull. Austr. Math. Soc.}
\textbf{111} (2025),
Issue 3,
568--570.





\bibitem{Hed72}
 Hedberg, L.\ I.:
On certain convolution inequalities, \textit{Proc.\ Amer.\ Math.\ Soc.} \textbf{36} (1972), 505--510.


\bibitem{Hedberg1972Zeit}
Hedberg, L. I.:
Non-linear Potentials and Approximation in the mean by Analytic Functions,
\textit{Math. Z.}\textbf{129}(1972), 299--319.




\bibitem{HedbergWolff1983}
Hedberg, L. I., Wolff, Th. H..:
Thin sets in nonlinear potential theory,
\textit{Ann. Inst. Fourier (Grenoble)} \textbf{33} (1983),  no. 4, 161--187.


\bibitem{HKM} 
 Heinonen, J.,   Kilpel\"ainen, T., Martio, O.: {\it Nonlinear Potential Theory of Degenerate Elliptic Equations}, Oxford University Press, Oxford, 1993.




\bibitem{HLLW2026}
Huang, M., Lahti, P., Li, J., Wang, Z.:
Boxing inequalities for relative fractional perimeter and fractional Poincar\'e-type inequalities on John domains with the BBM factor.
arXiv:2604.21506v1.






\bibitem{KM}
Kilpeläinen, T., Mal{\'y}, J.: The Wiener test and potential estimates
for quasilinear elliptic equations, \textit{Acta Math.}~\textbf{172} (1994), 137--161.

\bibitem{MartinezSpector2021}
Mart\'{i}nez, \'{A}.\ D.,  Spector, D.:
An improvement to the John-Nirenberg inequality for functions in critical Sobolev spaces,
\textit{Adv. Nonlinear Anal.}~\textbf{10} (2021), 877--894.






\bibitem{HM} 
 Maz'ya, V.\ G.,  Havin, V.\ P.: 
A nonlinear potential theory, 
\textit{Russian Math.\ Surveys}~\textbf{27} (1972), no.\ 6, 71--148.


\bibitem{Meyers}
Meyers, N. G.:
A theory of capacities for potentials of functions in Lebesgue classes,
\textit{Math. Scand.} \textbf{26} (1970), 255-292.



\bibitem{MPS2026}
Mihula, Z., Pick, L., Spector D.:
Potential trace inequalities via a Calderon-type theorem,
\textit{J. London Math. Soc.} (2)\textbf{113} (2026)no. 3, Paper No.  e70504, 35 pp.






\bibitem{Ooi2024}
Ooi, K.\ H.:
On the dual of Choquet integrals spaces associated with capacities,
\textit{Studia Math.}\textbf{274} (2024), no. 3, 249--268.



\bibitem{OoiPhuc2022}
Ooi, K. H.,  Phuc, N. C.:
The Hardy-Littlewood maximal function, Choquet integrals, and embeddings of Sobolev type,
\textit{Math. Ann.}~\textbf{382} (2022), 1865--1879.

 

 
\bibitem{PonceSpector2020}
Ponce, A.G., Spector, D.:
A boxing inequality for the fractional perimeter.
\textit{Ann. Sc. Norm. Super. Pisa Cl. Sci.}(5)
\textbf{20} (2020), 107--141.




\bibitem{PonceSpector2023}
Ponce, A. G.,  Spector, D.: 
Some remarks on Capacitary Integrals and Measure Theory.
In: 
Lenhart, S.,  Xiao, J. (eds.)
\textit{Potentials and Partial Differential Equations: The Legacy of David R. Adams}, 
Advances in Analysis and Geometry vol. 8, De Gruyter, Berlin  (2023)  pp. 127--155.

\bibitem{Reshentjak}
Reshentjak, Ju. G.: 
On the concept of capacity in the theory of functions wth generalised derivatives.
\textit{Sib. Mat. Zh.} \textbf{10} (1969), 1109--1138, (Russian).
English translation: Siberian Mat. J., 10 (1969), 818--842.



\bibitem{Stein}
Stein, E. M.:
{\it Singular Integrals and Differentiability Properties of Functions},
Princeton University Press, Princeton, New Jersey, 1970.


\bibitem{YangYuan08}
Yang D., Yuan, W.:  A note on dyadic Hausdorff capacities. \textit{Bull.\ Sci.\ Math.}~\textbf{132} (2008), 500--509.







\end{thebibliography}

\end{document}